\def\R{\mathbb R}
\def\Z{\mathbb Z}
\def\N{\mathbb N}
\def\1{\mathbf 1}
\def\1{\bold 1}
\def\le{\leqslant}
\def\ge{\geqslant}
\def\sign{\mathrm{sign}\,}
\def\caP{\mathcal{P}}
\theoremstyle{theorem}
\newtheorem{theorem}{Theorem}[section]
\newtheorem{proposition}[theorem]{Proposition}
\newtheorem{lemma}[theorem]{Lemma}
\newtheorem{remark}[theorem]{Remark}
\newtheorem{corollary}[theorem]{Corollary}
\newtheorem{definition}[theorem]{Definition}
\numberwithin{equation}{section}
\theoremstyle{plain}
\newtoks\thehProclaim
\newtheorem*{Proclaim}{\the\thehProclaim}
\begin{document}

\title[Regularity of the solution of the Prandtl equation]
{Regularity of the solution \\ of the Prandtl equation}

\author{V.~E.~Petrov, T.~A.~Suslina}

\address{TWELL Ltd., Shvetsova str. 12, St.~Petersburg, 198095, Russia}
\email{petrov$\_$twell@list.ru}

\address{St.~Petersburg State University
\\
Universitetskaya nab. 7/9
\\
St.~Petersburg, 199034, Russia}

\email{t.suslina@spbu.ru}

\keywords{Prandtl equation, weak solution, Fourier integral transformation, integral transformation on the interval}

\thanks{Mathematics Subject Classification (2010): MSC 47G20}

\thanks{Supported by Russian Science Foundation (project 17-11-01069).}

\begin{abstract}
Solvability and regularity of the solution of  the Dirichlet problem for the Prandtl equation 
$$
{u(x)\over p(x)}-   {1\over 2\pi}\int_{-1}^1 {u'(t) \over t-x} \,dt = f(x)
$$
is studied. It is assumed that $p(x)$ is a positive function on $(-1,1)$ such that $\sup \frac{(1-x^2)}{ p(x)} < \infty$. We introduce the scale of spaces  $\widetilde{H}^s(-1,1)$ in terms of the special integral transformation on the interval $(-1,1)$. We obtain theorem about existence and uniqueness  of the solution in  the classes $\widetilde{H}^{s}(-1,1)$ with $0\le s \le 1$. In particular, for $s=1$ the result is as follows: if $r^{1/2} f \in L_2$, then 
$r^{-1/2} u, r^{1/2} u' \in L_2$, where $r(x)=1-x^2$.  
\end{abstract}

\maketitle

\section*{Introduction}
\setcounter{section}{0}
\setcounter{equation}{0}

\subsection{The Prandtl equation: physical motivation} 
The Prandtl equation  
\begin{equation}
{u(x)}-  p(x) {1 \over 2 \pi}\intop_{-1}^1
{u'(t) \over t-x} \,dt = p(x)f(x),\quad  u(- 1)= u(1) =0, \label{0.10}
\end{equation} 
 is one of the universal equations of mathematical physics. It is used in almost all cases  when thin plates are studied.  
 In aerodynamics and hydrodynamics \cite{G,S} this equation describes the circulation (the load averaged along the chord $c(x)$) on a three-dimensional thin wing with the span $L$ in a stream running at the angle of attack 
 $\alpha_0(x)$ with the speed $U_0$. In this case,  $p(x)=a_0c(x)/L$ and $f(x)=\alpha_0(x)U_0,$ where $a_0$ 
 is some constant coefficient.  
 In magnetostatics \cite {Kr}, equation  \eqref{0.10} describes the surface (averaged along the thickness $\delta(x)$) magnetization induced in a thin plate of width $L$ of a ferromagnetic material with susceptibility $\varkappa$ 
 by a transverse external magnetic field with a tangential component $f(x)$.
 In this case, $p(x)=\varkappa\delta(x)/L$. In two-dimensional mechanics \cite{Kal} equation \eqref{0.10} is the main tool  in the study of contact problems and calculations of stiffeners.  Equation  \eqref{0.10} can be considered as the potential theory equation for the following boundary value problem  
\begin{equation}\left\{\begin{aligned}
&\Delta \Phi(x,y)=0,\quad (x,\,y)\in\R^2\backslash[-1,1],\\
& \partial_y \Phi^+(x,0)= \partial_y \Phi^-(x,0),\quad x\in[-1,1],\\
&\Phi^+(x,0)-\Phi^-(x,0)+p(x) \partial_y \Phi^+(x,0) =p(x)f(x),\quad x\in[-1,1],
\label{0.20}
\end{aligned}\right.\end{equation}
if the solution is found as a double layer potential with density $u(t)$:
$$
\Phi(x,y)={1\over2\pi}\intop_{-1}^1 u(t)\left.{\partial\over\partial \tau}\ln{1\over \rho}\right|_{\tau=0}\,dt,\quad \rho=\sqrt{(x-t)^2+(y-\tau)^2},\qquad{}
$$
or for the adjoint problem (in the sense of the Cauchy--Riemann conditions)
\begin{equation}\left\{\begin{aligned}
&\Delta \Psi(x,y)=0,\quad (x,\,y)\in\R^2\backslash[-1,1],\\
&\Psi^+(x,0)=\Psi^-(x,0),\quad x\in[-1,1],\\
&\partial_y \Psi^+(x,0) - \partial_y \Psi^-(x,0) - \partial_x \left( p(x) \partial_x \Psi(x,0)\right)=\partial_x(p(x)f(x)),\quad x\in[-1,1],\end{aligned}\right.
\label{0.30}
\end{equation}
if the solution is found as a single layer potential with density $u'(t)$:
$$
\Psi(x,y)={1\over2\pi}\intop_{-1}^1 u'(t)\,\ln{1\over \rho}\,dt,\quad \rho=\sqrt{(x-t)^2+y^2}.\qquad{}
$$
Boundary value problems of the type \eqref{0.20}, \eqref{0.30} arise from general three-dimensional problems, when one of the parameters of the domain (or of the surface) becomes ``thin''. 

As a rule, $p(x)\ge0,$ this function can vanish only at the ends of the interval. In general, the potential theory equations for boundary value problems \eqref{0.20}, \eqref{0.30} contain hypersingular operators. In problem \eqref{0.20} this is the normal derivative of the double layer potential, and in problem \eqref{0.30} this is the second tangential derivative of the single layer potential. However, the degeneracy at the ends of the coefficient $p(x)$ facing the integral operator smooths out such a singularity. In particular, if $p(x)=(1-x^2)p_0(x)$ and 
$p_0(\pm1)\ne 0,\,\infty,$ then the operator that corresponds to the second term in  \eqref{0.10}  is simply singular. 

Usually it is not possible to solve the Prandtl equation exactly\footnote{The only widely known case  is the elliptic wing. If $p(x)=p_0\sqrt{1-x^2}$ and $f(x)=1,$ then $u(x)={p_0\over p_0+4}\sqrt{1-x^2}.$}. Therefore, the main literature  concerning applications is devoted to the search for convenient numerical schemes. One of the most popular schemes is  the Multopp method. A significant part of the monograph \cite{Kal} is devoted to the justification of this method. However, the conditions under which the Multopp method is justified are too restrictive (for example,   the function ${\sqrt{1-x^2}\over p(x)}$ is required to be H{\"o}lder).

Thus, a rigorous functional study of the Prandtl equation under natural (from the point of view of physical applications) conditions on the coefficient $p(x)$ is very relevant. 

\subsection{Relation to the theory of the Schr{\"o}dinger equation with fractional Laplacian}
The integral operator in the Prandtl equation \eqref{0.10} can also be represented as 
\begin{equation*}
-{1\over 2\pi}\intop_{-1}^1{u'(t) \over t-x} \,dt =  \sqrt{-{d^2\over dx^2}} \,[u](x).
\end{equation*}
The number of papers devoted to the study of equations with fractional  Laplacian is huge. In particular, generalized (or weak) solutions of boundary value problems for  equations of the form 
$$
(-\Delta)^\sigma u(x) +V(x)u(x)=f(x),\quad x \in \Omega\subset \R^n,\quad 0< \sigma <1,
$$
are constructed.
A survey, some new results and extensive literature on this topic can be found in  \cite{DGV} and \cite{F}. 
Note that the one-dimensional case  is usually not distinguished, although it has an important feature: the boundary of the domain  in the one-dimensional case is disconnected  (it consists of two points~$\pm1$).

\subsection{Main results}
We study problem  \eqref{0.10} assuming that the coefficient  $p(x)$ is a  positive function on $(-1,1)$,
 it may vanish for $x=\pm 1$,  but the order of zeros  is not higher than the first degree (see condition  \eqref{p}
 on $V(x)=p(x)^{-1}$ below). This case includes interesting  from the practical point of view examples of triangular wing of an airplane and a composite wing  with a chord break of first kind.

In Section \ref{sec1}, using the Fourier transform approach, we define the weak solution of problem  
\eqref{0.10}  in the class $H^{1/2}_{00}(-1,1)$ (consisting of functions $u$ on the interval $(-1,1)$ that can be extended by zero to functions of the Sobolev class $H^{1/2}(\R)$). The equation is replaced by 
  appropriate integral identity, and it is assumed that $f(x)$ belongs to the class dual to  $H^{1/2}_{00}(-1,1)$ (with respect to the pairing in $L_2(-1,1)$). 
We prove  theorem about existence and uniqueness of the solution.  
(Of cource, analogs of these results are known; see  \cite{DGV}.)

However, the obtained solution from the class $H_{00}^{1/2}(-1,\,1)$ does not even have to be continuous. 
And from the physical point of view, one is interested in continuous solutions. 
Therefore, we need to investigate  the question of additional regularity of the solution (under the previous assumptions on the coefficient $p(x)$ by  strengthening the assumptions on the function $f(x)$).

In the authors opinion, the Fourier transformation on the axis is not the most convenient tool for studying the problem  on a finite interval. Therefore, investigating regularity of the solution, we use a special integral transformation $\caP$ on the interval, which was introduced and studied  in \cite{Petrov_PMA}. Earlier, application of the transform $\caP$ made it possible to  solve many problems on the  interval, which were  previously either  solved in a much more laborious way  \cite{Petrov_DRAN},  or remained open \cite{AP, Petrov_AA}. 
The definition and the main properties of  $\caP$ are described in Section~\ref{sec2}.
We introduce the interpolational scale of spaces 
$\widetilde{H}^{s}(-1,1)$, $s \ge 0$, in terms of the transform $\caP$.

In Section~\ref{sec3},  we give an independent definition  of the weak solution 
$u \in \widetilde{H}^{1/2}(-1,1)= H_{00}^{1/2}(-1,1)$ of problem \eqref{0.10} in terms of the transform $\caP$, 
and then establish  theorem about additional regularity of  the solution. Namely, 
under the assumption that 
$$\intop_{-1}^1 (1-x^2) |f(x)|^2 \, dx < \infty,$$
 it is proved that the solution belongs to the class
$\widetilde{H}^1(-1,1)$ (which is distinguished by condition \eqref{0033}) and satisfies the estimate
$$
\intop_{-1}^1 \left( \frac{|u(x)|^2}{1-x^2} + (1-x^2) |u'(x)|^2\right) \, dx
\le C  \intop_{-1}^1 (1-x^2) |f(x)|^2 \, dx.
$$
By interpolation, we obtain solvability of problem  \eqref{0.10} in $\widetilde{H}^{s}(-1,1)$ with \hbox{$1/2 \le s \le 1$} (under the assumption that  $f$ belongs to the class dual to $\widetilde{H}^{1-s}(-1,1)$).
For \hbox{$s> 1/2$}, the solution is continuous on the closed interval $[-1,1]$ and satisfies the boundary conditions $u(\pm 1)=0$. Finally, by duality arguments, we prove solvability of problem \eqref{0.10} in $\widetilde{H}^{s}(-1,1)$, where \hbox{$0 \le s < 1/2$} (under the assumption  that $f$ belongs to the class dual to $\widetilde{H}^{1-s}(-1,1)$). In particular, for $s=0$ we obtain existence and uniqueness of the so called
``very weak'' solution from the class $\widetilde{H}^0(-1,1)$
(which is distinguished by the condition $\int^1_{-1} (1-x^2)^{-1} |u(x)|^2\, dx < \infty$).

Thus, the application of an adequate apparatus allowed us to introduce the suitable scale of spaces and prove additional regularity  of the solution  under wide assumptions on the coefficient $p(x)$ and the function $f(x)$ on the right-hand side.

\subsection{Acknowledgements} The authors are grateful to A.~I.~Nazarov and F.~V.~Petrov for useful discussions.

\section{The weak solution of the Prandtl equation\label{sec1}}

\subsection{Statement of the problem}
It is convenient to put $V(x) := p(x)^{-1}$ and rewrite 
 the Prandtl equation \eqref{0.10} with the Dirichlet conditions as 
\begin{equation}
V(x) u(x) -   {1 \over 2 \pi}\intop_{-1}^1
{u'(t) \over t-x} \,dt = f(x),\quad  u(- 1)= u(1) =0.\label{2.1.10}
\end{equation} 
Here and below,  singular integrals are understood  in the mean value sense.
We assume that $V(x)$ is a measurable function on $(-1,1)$ satisfying the following conditions:
\begin{equation}
 V(x) \ge 0\ \text{for a.~e.}\ x\in(-1,\,1);\quad (1-x^2) V(x) \le M<\infty.\label{p}
\end{equation}
The assumptions on the right-hand side $f$ will be formulated later. 

Our first goal is to define  the weak solution of equation \eqref{2.1.10} and to prove theorem about existence and uniqueness of the solution.

\subsection{Definition of the weak solution. The approach via the Fourier transform}

As usual in the theory of weak solutions of boundary value problems, we will first conduct formal considerations that will  tell us how to define the solution correctly.   
Due to boundary conditions, it is natural to consider the function $u(x)$ on $\R$, extending it by zero.  
Let $g(x)$ be a function on $\R$ supported on $[-1,\,1]$. Multiply equation \eqref{2.1.10} by $\overline{g(x)}$ and integrate:
\begin{equation}
\intop_{-1}^1 V(x)u(x)\,\overline{g(x)} \,dx +{1\over 2 \pi} \intop_{-\infty}^{\infty}  \overline{g(x)} 
\intop_{-\infty}^{\infty} {u'(t) \over x-t} \,dt\,dx=\intop_{-1}^1 f(x)\,\overline{g(x)}\,dx .\label{2.1.20}
\end{equation}
Next, we use the Fourier transform, which is taken in the form
$$
\widehat{u}(\zeta)=\intop_{-\infty}^{\infty}  u(x)\,e^{i x\zeta}\,dx,\quad u(x)={1\over2\pi}\intop_{-\infty}^{\infty}   \widehat{u}(\zeta) e^{-i x\zeta}\,d\zeta.
$$
Let us transform the second term in the left-hand side of \eqref{2.1.20} with the help of 
   the Parseval identity
\begin{equation}
\label{Pars_F}
 \intop_{-\infty}^{\infty} v(x) \, \overline{g(x)}\,dx = \frac{1}{2\pi} \intop_{-\infty}^{\infty}   \widehat{v}(\zeta)\,
\overline{\widehat{g}(\zeta)}\, d\zeta.
\end{equation}
 Observe  that the internal integral in \eqref{2.1.20} is the Fourier-convolution\footnote{In what follows, we will also  consider  convolution for another integral transform.}  $u'(t)* t^{-1}.$ The Fourier-image of $u'(t)$ is given by  
  $-i\zeta\,\widehat{u}(\zeta)$,  and the Fourier-image of the function $t^{-1}$ equals $i\pi\,\sign\zeta.$
 Thus, equation \eqref{2.1.20} can be written as 
\begin{equation}
\intop_{-1}^1 V(x) u(x)\,\overline{g(x)} \,dx+ \frac{1}{4\pi} \intop_{-\infty}^{\infty}  |\zeta|\, \widehat{u}(\zeta)\,\overline{\widehat{g}(\zeta)}\,d\zeta=\intop_{-1}^1 f(x)\,\overline{g(x)}\,dx .\label{2.1.30}
\end{equation}
Denote the sesquilinear form in the left-hand side of \eqref{2.1.30} by
\begin{equation}
[u, g]:= \intop_{-1}^1 V(x)u(x)\,\overline{g(x)} \,dx+ \frac{1}{4\pi} \intop_{-\infty}^{\infty}  |\zeta|\, \widehat{u}(\zeta)\,\overline{\widehat{g}(\zeta)}\,d\zeta.\label{2.1.30a}
\end{equation}

A natural class to look for the weak solution  is the class\footnote{In the literature, various notations are used for this class and its dual; we accept the notation from \cite{LM}.}
$H_{00}^{1/2} := H_{00}^{1/2}(-1,1)$ defined as a subspace in the Sobolev space $H^{1/2}(\R)$ consisting 
of functions  equal to zero almost everywhere  outside the segment $[-1,1]$.
Concerning the properties of this class of functions, see \cite[Chapter 1, Section 11.5]{LM}.
The norm in $H_{00}^{1/2}$ is defined as the standard norm in $H^{1/2}(\R)$:
$$
\| u\|^2_{H_{00}^{1/2}} = \| u \|^2_{H^{1/2}(\R)} = 
\frac{1}{2\pi} \intop_{-\infty}^{\infty}  (1+ |\zeta|^2)^{1/2} |\widehat{u}(\zeta)|^2\,d\zeta,\quad u\in H_{00}^{1/2}.
$$
Note that the set $C_0^\infty(-1,1)$ is dense in  $H_{00}^{1/2}$.

We need the following property of the functions from $H_{00}^{1/2}$ 
(for the sake of completeness, we provide the proof).

\begin{lemma}
Any function $u \in H_{00}^{1/2}(-1,1)$ satisfies 
\begin{equation}
\intop_{-1}^1 {\ |u(x)|^2\over 1-x^2}\,dx\le \frac{1}{2} \intop_{-\infty}^\infty |\zeta| |\widehat{u}(\zeta)|^2\,d\zeta. 
\label{0005}
\end{equation}
\end{lemma}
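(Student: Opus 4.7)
The strategy is to pass from the Fourier-side expression on the right of \eqref{0005} to the Gagliardo double-integral form of the $H^{1/2}(\R)$-seminorm, where the support property of $u$ can be exploited by elementary means.

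First, I would establish the identity
$$
\intop_{-\infty}^\infty |\zeta|\,|\widehat{u}(\zeta)|^2\,d\zeta \;=\; \intop_{-\infty}^\infty \intop_{-\infty}^\infty \frac{|u(x)-u(y)|^2}{(x-y)^2}\,dx\,dy
$$
valid for every $u \in H^{1/2}(\R)$. The derivation is standard: from $\widehat{u(\cdot + h) - u}(\zeta) = (e^{-ih\zeta} - 1)\widehat{u}(\zeta)$ and the Parseval identity \eqref{Pars_F} one obtains
$\intop|u(x+h) - u(x)|^2 dx = (2\pi)^{-1} \intop 4\sin^2(h\zeta/2)\,|\widehat u(\zeta)|^2 d\zeta$.
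Dividing by $h^2$, integrating over $h\in\R$, using Fubini together with $\intop_\R 4\sin^2(h\zeta/2)/h^2\,dh = 2\pi|\zeta|$, and substituting $y = x+h$ on the spatial side, yields the identity.

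The second step uses the zero extension. Since $u(y) = 0$ for $|y|>1$, the double integral over $\R^2$ is bounded below by the ``mixed'' part, in which one variable lies in $[-1,1]$ and the other outside:
$$
\intop_{-\infty}^\infty \intop_{-\infty}^\infty \frac{|u(x)-u(y)|^2}{(x-y)^2}\,dx\,dy \;\ge\; 2\intop_{-1}^1 |u(x)|^2 \left(\intop_{\R\setminus[-1,1]} \frac{dy}{(x-y)^2}\right) dx .
$$
The inner integral is elementary: for $x \in (-1,1)$,
$$
\intop_{-\infty}^{-1}\frac{dy}{(x-y)^2} + \intop_1^{\infty}\frac{dy}{(x-y)^2} \;=\; \frac{1}{1+x} + \frac{1}{1-x} \;=\; \frac{2}{1-x^2}.
$$
Combining with the Fourier--Gagliardo identity from the first step yields $\intop|\zeta|\,|\widehat{u}|^2\,d\zeta \ge 4\intop_{-1}^1 |u(x)|^2/(1-x^2)\,dx$, which is in fact stronger than \eqref{0005} (by a factor of two) and suffices.

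The only genuine technicality is justifying Parseval and Fubini for a general $u \in H_{00}^{1/2}$. The plan here is to prove the inequality first on the dense subset $C_0^\infty(-1,1) \subset H_{00}^{1/2}$, where all manipulations are absolutely convergent, and then extend by density, using continuity of the right-hand side in the $H^{1/2}(\R)$-norm and Fatou's lemma for the weighted $L^2$-integral on the left. The main conceptual step is the first identity; everything afterwards is explicit computation.
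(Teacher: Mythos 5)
Your proposal is correct and follows essentially the same route as the paper: the Fourier--Gagliardo identity for the $H^{1/2}(\R)$-seminorm, followed by restriction of the double integral to the mixed region and the explicit computation $\int_{\R\setminus[-1,1]}(x-y)^{-2}\,dy = 2/(1-x^2)$. The only difference is that you keep both symmetric halves of the mixed region and thereby obtain the sharper constant $1/4$ (the paper keeps only one half and gets $1/2$), which of course still implies \eqref{0005}.
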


\begin{proof}
We extend $u \in H_{00}^{1/2}$ by zero, keeping the same notation.  
Let us use the following identity valid on the class $H^{1/2}(\R)$:
\begin{equation}
\intop_{-\infty}^\infty  \intop_{-\infty}^\infty \frac{|u(x) - u(y)|^2}{|x-y|^2}\, dx\,dy = 
\intop_{-\infty}^\infty |\zeta| |\widehat{u}(\zeta)|^2\,d\zeta. 
\label{double-int}
\end{equation}
To check \eqref{double-int}, denote the left-hand side by $J[u]$ and substitute $y=x+z$:
\begin{equation*}
\begin{aligned}
J[u] &= 
\intop_{-\infty}^\infty \frac{dz}{|z|^2} \intop_{-\infty}^\infty |u(x+z) - u(x)|^2\, dx 
= \frac{1}{2\pi} \intop_{-\infty}^\infty \frac{dz}{|z|^2} \intop_{-\infty}^\infty |e^{-i z \zeta}-1|^2 |\widehat{u}(\zeta)|^2\, d\zeta =
\\ 
&= \frac{1}{2\pi}\intop_{-\infty}^\infty  |\widehat{u}(\zeta)|^2\, d\zeta   \intop_{-\infty}^\infty \frac{|e^{-i z \zeta}-1|^2}{|z|^2}\, dz.
\label{double-int-2}
\end{aligned}
\end{equation*}
We have used the Parseval identity \eqref{Pars_F} and the Fubini theorem. 
The internal integral is equal to  
$2\pi |\zeta|$. This implies \eqref{double-int}.

Since $u(y)=0$ for $y\in\R\backslash[-1,1],$ from \eqref{double-int} it follows that 
\begin{equation*}
\intop_{-\infty}^\infty |\zeta| |\widehat{u}(\zeta)|^2\,d\zeta \ge
\intop_{-1}^1 |u(x) |^2 dx \intop_{-\infty}^{-1} \frac{dy}{|x-y|^2} +
\intop_{-1}^1 |u(x)|^2 dx \intop_{1}^{\infty} \frac{dy }{|x-y|^2} =  2\intop_{-1}^1 {\ |u(x)|^2\over 1-x^2}\,dx,
\end{equation*}
which proves \eqref{0005}.
\end{proof}

Let us check that the relation $u \in H_{00}^{1/2}$ is equivalent to the condition $[u,u]< \infty$.

Suppose that $[u,u]< \infty$.  Then, by \eqref{0005}, 
\begin{equation}
\intop_{-1}^1  |u(x)|^2 \, dx \le \intop_{-1}^1 {\ |u(x)|^2\over 1-x^2}\,dx\le \frac{1}{2} \intop_{-\infty}^\infty |\zeta| |\widehat{u}(\zeta)|^2\,d\zeta. 
\label{01}
\end{equation}
 From \eqref{2.1.30a} and \eqref{01}, using the Parceval identity \eqref{Pars_F}, we obtain
 \begin{equation}
 \| u\|^2_{H_{00}^{1/2}} \le \intop_{-1}^1  |u(x)|^2 \, dx +\frac{1}{2\pi} \intop_{-\infty}^{\infty}   |\zeta| |\widehat{u}(\zeta)|^2\,d\zeta
\le (2\pi + 2) [u,u].
\label{02}
\end{equation}

On the other hand, by \eqref{p} and \eqref{0005}, 
$$
\intop_{-1}^1  V(x) |u(x)|^2 \,dx \le M \intop_{-1}^1  \frac{|u(x)|^2}{1- x^2}\,dx
\le  M \pi \|u\|^2_{H^{1/2}_{00}}.
$$
  Together with \eqref{2.1.30a}, this implies that 
\begin{equation}
 [u,u] \le (M \pi+1/2) \| u\|^2_{H_{00}^{1/2}}, \quad u\in H_{00}^{1/2}.
\label{03b}
\end{equation}

 From \eqref{02} and \eqref{03b} it follows that the form $[u,u]^{1/2}$ determines the norm in $ H_{00}^{1/2}$
 equivalent to the standard one. Then the sesquilinear form $[u,g]$ given by \eqref{2.1.30a}
 can be taken as  the inner product in this space.

Let us write identity \eqref{2.1.30} in the form 
\begin{equation}
[u,\,g]=(f,\,g),
\label{2.1.45}
\end{equation}
where $(f,\,g) := (f,\,g)_{L_2(-1,1)}$.
The natural class for  $f$ is the space $(H_{00}^{1/2})^*$ dual to 
$H_{00}^{1/2}$ with respect to the pairing in $L_2(-1,1)$. 
In other words, a distribution $f$ that is an anti-linear continuous functional over $C_0^\infty(-1,1)$
belongs to $(H_{00}^{1/2})^*$ if 
 \begin{equation}
\label{03}
\sup_{0 \ne u \in C_{0}^{\infty}(-1,1)} \frac{|(f,u)|}{\|u\|_{H_{00}^{1/2}}} < \infty.
\end{equation}
Then the pairing $(f,u)$ in $L_2(-1,1)$ extends to the pairs $f\in (H_{00}^{1/2})^*$ and $u \in 
H_{00}^{1/2}$, and the left-hand side of \eqref{03} is taken as the norm of $f$ in $(H_{00}^{1/2})^*$. Note that  
\begin{equation}
\label{03a}
\|f \|_{(H_{00}^{1/2})^*} =
\sup_{0 \ne u \in H_{00}^{1/2}} \frac{|(f,u)|}{\|u\|_{H_{00}^{1/2}}}.
\end{equation}

  Now, we give a definition of the weak solution of problem \eqref{2.1.10}. 

\begin{definition}\label{def1.1}
Let $f \in (H_{00}^{1/2})^*$. An element $u\in H_{00}^{1/2}$ satisfying integral identity \eqref{2.1.30} for any $g\in H_{00}^{1/2}$ is called the weak solution of problem \eqref{2.1.10}.
\end{definition}

\begin{theorem}\label{th1.2}
Suppose that  $V(x)$ satisfies \eqref{p}. Then for any 
$f \in (H_{00}^{1/2})^*$ there exists a unique weak solution $u\in H_{00}^{1/2}$ of problem  \eqref{2.1.10}.
The solution satisfies the following estimate{\rm :}
\begin{equation}
\label{05}
\|u \|_{H_{00}^{1/2}} \le (2\pi +2) \|f\|_{(H_{00}^{1/2})^*}.
\end{equation}
\end{theorem}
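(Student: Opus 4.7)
The plan is to apply the Riesz representation theorem directly; the heavy lifting has already been done in the paper via the norm equivalence established just before the theorem.

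First, I would observe that the sesquilinear form $[u,g]$ defined by \eqref{2.1.30a} is Hermitian (both $V$ is real and $|\zeta|$ is real and even) and nonnegative, by the assumption $V\ge 0$ and positivity of $|\zeta|$. Positive-definiteness is immediate from \eqref{02}: if $[u,u]=0$ then $\|u\|_{H_{00}^{1/2}}=0$, so $u=0$. The two-sided bound \eqref{02}--\eqref{03b} tells us that $(H_{00}^{1/2}, [\cdot,\cdot])$ is a Hilbert space whose norm $[u,u]^{1/2}$ is equivalent to the standard $H_{00}^{1/2}$-norm. Call this Hilbert space $\mathcal{H}$.

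Next, given $f\in (H_{00}^{1/2})^*$, the assignment $g \mapsto (f,g)$ is an antilinear continuous functional on $\mathcal{H}$ by \eqref{03a} and the norm equivalence. By the Riesz representation theorem applied to the inner product $[\cdot,\cdot]$, there exists a unique $u\in\mathcal{H}=H_{00}^{1/2}$ such that
\begin{equation*}
[u,g]=(f,g)\qquad\text{for every }g\in H_{00}^{1/2},
\end{equation*}
which is precisely Definition~\ref{def1.1}. This yields both existence and uniqueness.

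For the quantitative estimate \eqref{05}, I would test the identity against $g=u$, obtaining $[u,u]=(f,u)$. Then by \eqref{02} and \eqref{03a},
\begin{equation*}
\|u\|^2_{H_{00}^{1/2}} \le (2\pi+2)\,[u,u]=(2\pi+2)\,(f,u)\le (2\pi+2)\,\|f\|_{(H_{00}^{1/2})^*}\,\|u\|_{H_{00}^{1/2}},
\end{equation*}
and dividing by $\|u\|_{H_{00}^{1/2}}$ (the case $u=0$ being trivial) gives \eqref{05}. There is no real obstacle at this stage: all difficulty was already absorbed into the verification of the bound \eqref{0005} and the consequent equivalence \eqref{02}--\eqref{03b}, which together legitimize the use of $[\cdot,\cdot]$ as an inner product on $H_{00}^{1/2}$.
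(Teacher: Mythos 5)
Your proposal is correct and follows essentially the same route as the paper: both rely on the norm equivalence \eqref{02}--\eqref{03b} to view $[\cdot,\cdot]$ as an inner product on $H_{00}^{1/2}$, apply the Riesz representation theorem to the functional $g\mapsto(f,g)$, and derive \eqref{05} from $[u,u]=(f,u)$ together with \eqref{02} and \eqref{03a}.
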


\begin{proof}
We write identity \eqref{2.1.30} in the form \eqref{2.1.45}.
By \eqref{03a}, the right-hand side $l_f(g)=(f,g)$ is an anti-linear continuous functional over 
$g \in  H_{00}^{1/2}$, and 
\begin{equation}
\label{04}
| l_f(g)| \le \|f\|_{(H_{00}^{1/2})^*}  \|g\|_{H_{00}^{1/2}}.
\end{equation}

We consider $H_{00}^{1/2}$ as the Hilbert space with the inner product $[u,g]$. 
By the Riesz theorem about the general form of an anti-linear continuous functional in a Hilbert space,  there exists a unique element $u\in H_{00}^{1/2}$ such that $l_f(g)=[u,\,g],\ \forall g\in H_{00}^{1/2}$, i.~e., identity \eqref{2.1.45} holds. This proves existence and uniqueness of the solution.

Estimate \eqref{05} follows from the identity $[u,u] = l_f(u)$ and relations \eqref{02} and \eqref{04}.
\end{proof}

\textbf{Example}.
Denote $r(x):= 1 - x^2$ and assume that $f \in L_{2,r}(-1,1)=: L_{2,r},$ i.~e., 
\begin{equation}
\|f\|^2_{L_{2,r}} = \intop_{-1}^1 (1-x^2)\,|f(x)|^2\,dx <\infty.\label{H}
\end{equation}
Then, by \eqref{0005},  
\begin{equation}
\bigl|(f,\,g)\bigr| \!\le\! \left(\,\intop_{-1}^1 r(x)\,|f(x)|^2 \,dx\right)^{\!1/2}\left(\,\intop_{-1}^1{|g(x)|^2\over r(x)}\,dx\right)^{\!1/2}\! \le\! \sqrt{\pi} \|f\|_{L_{2,r}} \|g \|_{H^{1/2}_{00}},\quad g \in H_{00}^{1/2}.\label{2.1.47}
\end{equation}
It follows that $L_{2,r} \subset (H_{00}^{1/2})^*$ and 
$
\|f \|_{(H_{00}^{1/2})^*} \le \sqrt{\pi} \|f\|_{L_{2,r}}.
$
Together with  \eqref{05}, this yields the following estimate of the solution:
\begin{equation}\label{012}
\|u \|_{H_{00}^{1/2}}  \le (2\pi+2)\sqrt{\pi}   \|f\|_{L_{2,r}}. 
\end{equation}

Note that, in general, functions of class $H_{00}^{1/2}$ may be discontinuous.  
However, under condition $f \in L_{2,r}$ we can expect that the solution is more regular.
It turns out that it is inconvenient to study this question via the Fourier transform approach. We will study this problem using  another integral transform.

\section{The integral transformation $\caP$\label{sec2}}
We need the integral transform $\caP$ on the interval which was studied in details in \cite{Petrov_PMA, Petrov_DRAN}. In the present section, main properties of this transform 
are described. In terms of $\caP$, we introduce the scale of the Hilbert spaces $\widetilde{H}^s(-1,1)$; this material is new.

\subsection{Definition of the transform $\caP$} Consider the space $\widetilde{L}_2(-1,\,1)$ consisting of all measurable functions on the interval $(-1,\,1)$ such that
\begin{equation*}
\|u\|^2_{\widetilde{L}_2(-1,1)} := \intop_{-1}^1 {\ |u(x)|^2\over1-x^2}\,dx <\infty.
\end{equation*}
For functions $u \in \widetilde{L}_2(-1,\,1)$, the transform $\caP$ is defined as follows\footnote{In \cite{Petrov_PMA, Petrov_DRAN}, $\caP$ was defined as a transform from the interval to the imaginary axis.}:
\begin{equation}\left\{\begin{aligned}
U(\xi):=&\caP[u](\xi)=\intop_{-1}^1 u(y)\left({1-y\over1+y}\right)^{i\xi}\,\frac{dy}{1-y^2},\quad \xi\in\R,
\\
u(x)=&\caP^{-1}[U](x)={1\over\pi}\intop_{-\infty}^\infty  U(\xi)\,\left({1-x\over1+x}\right)^{-i\xi}\,d\xi,\quad x\in (-1,\,1).
\label{0020}
\end{aligned}\right.\end{equation}
Here $U \in L_2(\R).$
The exact explanation of the meaning of relations \eqref{0020} can be found in \cite{Petrov_PMA, Petrov_DRAN}.
In what follows, by default, the originals are denoted by lowercase letters, and the $\caP$-images by the corresponding  uppercase letters.

\subsection{The Parceval identity}  We have the following Parceval identity for the transform $\caP$: 
\begin{equation}
\intop_{-1}^1 u(y)\,\overline{g(y)}\, \frac{dy}{1-y^2}= \frac{1}{\pi} 
\intop_{-\infty}^\infty U(\xi)\,\overline{G(\xi)}\,d\xi.\label{0050}
\end{equation}
The relation $u\in \widetilde{L}_2(-1,\,1)$ is equivalent to the relation $U \in L_2(\R).$
Thus, the operator $\frac{1}{\sqrt{\pi}}\caP$ is a unitary mapping of the space $\widetilde{L}_2(-1,\,1)$
onto $L_2(\R)$.

\subsection{Relationship between $\caP$ and the Fourier transform\label{sec2.3}} 
The transform $\caP$ is related to the Fourier transform by the following change of variables:  
\begin{equation}\label{zamena1}
x=\tanh\omega;\quad \omega={1\over2}\ln{1-x\over1+x},\ \omega \in \R;\quad u(x)=u_1(\omega).
\end{equation}
Then the $\caP$-image of a function $u(x)$ and the Fourier-image of $u_1(\omega)$ satisfy 
\begin{equation}\label{zamena2}
U(\xi)={\widehat{u}}_1(2\xi).
\end{equation}
It is easily seen that the linear mapping $A: \widetilde{L}_2(-1,1) \to L_2(\R)$, defined by the rule 
$(Au)(\omega)= u_1(\omega)$, is an isometric isomorphism: 
$$
\| u \|_{\widetilde{L}_2(-1,1)} = \| Au\|_{L_2(\R)}.
$$

\subsection{The $\caP$-transformation of the derivatives. The spaces $\widetilde{H}^n$, $n \in {\mathbb Z}_+$} 
Now, we define the space  $\widetilde{H}^{n}(-1,1)=: \widetilde{H}^{n}$ of measurable functions $u(x)$ having the generalized derivatives up to order $n$ on the interval $(-1,\,1)$ and satisfying  
\begin{equation*}
|\!|\!| u |\!|\!|_{\widetilde{H}^{n}}^2 := \intop_{-1}^1 \sum_{m=0}^n \Bigl|(1-x^2)^m u^{(m)}(x)\Bigr|^2\,
\frac{dx}{1-x^2}<\infty.
\end{equation*}
Then $\widetilde{H}^{0}(-1,\,1)= \widetilde{L}_2(-1,\,1).$  Obviously,  $\widetilde{H}^{n}(-1,1) \subset  H^n_{\mathrm{loc}}(-1,\,1).$ 

If $u\in \widetilde{H}^{n},$ then, integrating by parts, we obtain 
\begin{equation}
\caP\left[\Bigl((1-y^2){d\over dy}\Bigr)^nu(y)\right](\xi)=(2i\xi)^n U(\xi).\label{0030}
\end{equation}

Let us explain this in details for  $n=1$. The condition $u \in \widetilde{H}^1$ means that  
\begin{equation}
  |\!|\!| u  |\!|\!|_{\widetilde{H}^{1}}^2 = \intop_{-1}^1 \left({\ |u(x)|^2\over1-x^2}+(1-x^2)|u'(x)|^2\right)dx
<\infty. \label{0033}
\end{equation}
 Since  $\widetilde{H}^1 \subset H^1_{\text{loc}}(-1,1)$, then, by the Sobolev embedding theorem, any function \hbox{$u \in \widetilde{H}^1$} is absolutely continuous inside the interval $(-1,1)$. We will show that it is continuous on the closed interval $[-1,1]$. Indeed, for any  $x,y \in (-1,1)$ we have 
$$
\bigl| |u(y)|^2- |u(x)|^2 \bigr|=\left|2 \operatorname{Re} \intop_x^y u(t)\,\overline{u'(t)}\,dt\right|\le \intop_x^y\left( {|u(t)|^2\over1-t^2}+
 (1-t^2)|u'(t)|^2\right)\,dt,
$$
and from condition \eqref{0033} it follows that $\bigl| |u(y)|^2- |u(x)|^2 \bigr| \to 0$, as $x\to 1$ and $y\to 1$. 
Then, applying the Cauchy criterion, we conclude that there exists a finite limit 
$
\lim_{y \to 1-0} |u(y)|^2.
$
 This limit is equal to zero, since otherwise the integral $\int_{-1}^1 \frac{|u(t)|^2}{1-t^2}\,dt$ will be divergent.
 Hence, $u(y)$ converges to zero, as  $y \to 1-0$,  
 and we can put  
 $$
 u(1):= \lim_{y \to 1-0} u(y) =0.
 $$ 
 In a similar fashion, we check that there exists a limit  $u(-1):= \lim_{y \to -1+0} u(y)  =0$.
 Using that $u$ is continuous inside the interval  $(-1,1)$, we obtain $u \in C[-1,1]$.

Integrating by parts and using the boundary conditions $u(-1)=u(1)=0$, we obtain relation \eqref{0030} with $n=1$:
 \begin{equation}
\caP\left[(1-y^2)u'(y)\right](\xi)
=
\intop_{-1}^1 u'(y)\left({1-y\over1+y}\right)^{i\xi}\,dy=2i\xi\, U(\xi),\quad u \in \widetilde{H}^1. \label{0040}
\end{equation}

Next,  from \eqref{0050}, \eqref{0033}, and \eqref{0040} it follows that 
\begin{equation*}
  |\!|\!| u  |\!|\!|_{\widetilde{H}^{1}}^2 
= \frac{1}{\pi}\intop_{-\infty}^\infty \left( 1 + 4 \xi^2\right) |U(\xi)|^2 \, d\xi, \quad u \in \widetilde{H}^1. \label{**1}
\end{equation*}

Relation \eqref{0030} for  
$u \in \widetilde{H}^n$ with arbitrary $n \in \N$ is proved similarly. It turns out that  $|\!|\!| u |\!|\!|^2_{\widetilde{H}^n}$ admits  two-sided estimates by  
\begin{equation}
\|u\|^2_{\widetilde{H}^n} := \frac{1}{\pi}\intop_{-\infty}^\infty \left( 1 + 4 \xi^2\right)^n |U(\xi)|^2 \, d\xi.
 \label{**1aa}
\end{equation}

\begin{remark}\label{rem_Hn}
The above arguments  allow us to give  another definition of the spaces  $\widetilde{H}^n$ \emph{(}for any $n \in \Z_+$\emph{)} in terms of the transform $\caP$\emph{:}
$\widetilde{H}^n(-1,1)$  is the class of functions $u \in \widetilde{L}_2(-1,1)$ for which the norm 
$\|u\|_{\widetilde{H}^n}$ given by \eqref{**1aa} is finite.
By \eqref{zamena2},  it is obvious that the mapping $A$ \emph{(}see Subsection \emph{\ref{sec2.3})}
restricted to $\widetilde{H}^n(-1,1)$ is an isometric isomorphism of the space $\widetilde{H}^n(-1,1)$ onto the Sobolev space $H^n(\R)$\emph{:}
$$
\| u \|_{\widetilde{H}^n} = \| Au\|_{H^n(\R)}, \quad u \in \widetilde{H}^n(-1,1).
$$
\end{remark}

\subsection{The spaces $\widetilde{H}^s$\label{sec2.5}} Now, we introduce the space $\widetilde{H}^s(-1,1)=: \widetilde{H}^s$ 
with arbitrary index $s \ge 0$, as a subspace of $\widetilde{L}_2(-1,1)= \widetilde{H}^0$, consisting of functions  
$u$ such that  
\begin{equation*}
\|u\|^2_{\widetilde{H}^s} := \frac{1}{\pi}\intop_{-\infty}^\infty \left( 1 + 4 \xi^2\right)^s |U(\xi)|^2 \, d\xi < \infty.
\end{equation*}
 Automatically, for $n \in \Z_+$ this agrees with  the previous definition (see Remark \ref{rem_Hn}).
For any $s \ge 0$,  the mapping $A$ restricted to 
$\widetilde{H}^s(-1,1)$ is an isometric isomorphism of the space $\widetilde{H}^s(-1,1)$ onto the Sobolev space $H^s(\R)$\emph{:}
$$
\| u \|_{\widetilde{H}^s} = \| Au\|_{H^s(\R)}, \quad u \in \widetilde{H}^s(-1,1).
$$
Since $H^s(\R)$, $s \ge 0$, forms an interpolational scale of Hilbert spaces, the same is true for the spaces $\widetilde{H}^s(-1,1)$, $s \ge 0$.

\begin{remark}
Recall that in the space $H^s(\R)$ with $s \ne [s] =:k$ the following norm given in the internal terms is equivalent to the standard norm\emph{:}
$$
\boldsymbol{|} u_1 \boldsymbol{|}_{H^s(\R)}^2 = \sum_{j=0}^k 
\intop_{-\infty}^\infty |u^{(j)}_1(\omega)|^2\, d\omega + \intop_{-\infty}^\infty\intop_{-\infty}^\infty
\frac{|u_1^{(k)}(\omega) - u_1^{(k)}(\tau)|^2}{|\omega - \tau|^{1+2\{s\}}}
\, d\omega\, d\tau,\quad \{s\} = s -k.
$$  
Using the isomorphism $A$, we see that the following norm given in the internal terms is equivalent to the 
 standard norm in the space $\widetilde{H}^s(-1,1)$ with \hbox{$s \ne [s] =k$}\emph{:}
$$
\begin{aligned}
\boldsymbol{|} u \boldsymbol{|}_{\widetilde{H}^s}^2 &= \sum_{j=0}^k 
\intop_{-1}^1 \bigl|(1-x^2)^j u^{(j)}(x)\bigr|^2\, \frac{dx}{1-x^2}
\\
&+ \intop_{-1}^1\intop_{-1}^1
\frac{\left|(1-x^2)^k u^{(k)}(x) - (1-y^2)^k u^{(k)}(y)\right|^2}{\left| \operatorname{ln} \Bigl(\frac{1-x}{1+x}\Bigr) -  \operatorname{ln} \Bigl(\frac{1-y}{1+y}\Bigr) \right|^{1+2\{s\}}}
\, \frac{dx}{1-x^2}\, \frac{dy}{1-y^2}.
\end{aligned}
$$  
However, below we will not use this norm.
\end{remark}

Using the isomorphism $A$, from the density of $C_0^\infty(\R)$ in $H^s(\R)$,
we deduce the following statement.

\begin{proposition}
\label{prop_plotno}
 For any $s \ge 0$, the set $C_0^\infty(-1,1)$ is dense in the space $\widetilde{H}^s(-1,1)$. 
\end{proposition}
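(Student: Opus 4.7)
The plan is to transfer the well known density of $C_0^\infty(\R)$ in $H^s(\R)$ through the isometric isomorphism $A : \widetilde{H}^s(-1,1) \to H^s(\R)$ introduced in Remark \ref{rem_Hn} and Subsection \ref{sec2.5}. Recall that $A$ acts by the change of variables $x = \tanh\omega$ from \eqref{zamena1}, sending $u(x)$ to $u_1(\omega) = u(\tanh\omega)$. Since $A$ is an isometry of $\widetilde{H}^s$ onto $H^s(\R)$, it suffices to show that $A\bigl(C_0^\infty(-1,1)\bigr) = C_0^\infty(\R)$; then density of $C_0^\infty(\R)$ in $H^s(\R)$, pulled back by $A^{-1}$, gives the claim.

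The key observation is that $\omega \mapsto \tanh\omega$ is a smooth diffeomorphism between $\R$ and the open interval $(-1,1)$. First I would check the forward inclusion: if $u \in C_0^\infty(-1,1)$, then $\operatorname{supp} u$ is a compact subset of $(-1,1)$, hence bounded away from $\pm 1$, so its preimage $\tanh^{-1}(\operatorname{supp} u)$ is a compact subset of $\R$; smoothness of $u_1$ on $\R$ follows from the chain rule. Conversely, for $\varphi \in C_0^\infty(\R)$, the function $u(x) = \varphi\bigl(\tfrac12 \ln\tfrac{1-x}{1+x}\bigr)$ is smooth on $(-1,1)$ and its support $\tanh(\operatorname{supp}\varphi)$ is compact in $(-1,1)$ since $\operatorname{supp}\varphi$ is a bounded subset of $\R$. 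Thus $A$ restricts to a bijection between $C_0^\infty(-1,1)$ and $C_0^\infty(\R)$.

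Combining these ingredients: given $u \in \widetilde{H}^s(-1,1)$, set $u_1 = Au \in H^s(\R)$, approximate $u_1$ in $H^s(\R)$ by $\varphi_n \in C_0^\infty(\R)$, and define $u_n = A^{-1}\varphi_n \in C_0^\infty(-1,1)$. Then
$$
\|u - u_n\|_{\widetilde{H}^s} = \|Au - A u_n\|_{H^s(\R)} = \|u_1 - \varphi_n\|_{H^s(\R)} \to 0,
$$
completing the proof. The only step requiring any care is the verification that compactness of support is preserved in both directions under $\tanh$, which is immediate once one notes that $\tanh$ is a homeomorphism of $\R$ onto $(-1,1)$; there is no substantive obstacle.
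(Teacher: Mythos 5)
Your argument is correct and is exactly the route the paper takes: transfer the density of $C_0^\infty(\R)$ in $H^s(\R)$ through the isometric isomorphism $A$, noting that the change of variables \eqref{zamena1} is a diffeomorphism of $\R$ onto $(-1,1)$ and hence carries $C_0^\infty(\R)$ bijectively onto $C_0^\infty(-1,1)$. The paper states this in one line without detail, so your write-up simply supplies the (correct) verification that compact supports are preserved in both directions.
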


We need the following statement, which is an analog of the Sobolev embedding theorem.

\begin{proposition}\label{prop_2.1}
 Let $s>1/2$. Then we have the continuous embedding  
$$
\widetilde{H}^s(-1,1) \subset C[-1,1].
$$
Any function $u \in \widetilde{H}^s(-1,1)$ satisfies the boundary conditions  
\begin{equation}
\label{gran_usl}
u(-1)= u(1)=0
\end{equation} 
and the estimate 
 \begin{equation}
 \label{vlozh}
 \| u \|_{C[-1,1]} \le
C(s) \| u \|_{\widetilde{H}^{s}}, \quad C(s) = \left( \frac{\Gamma(s-1/2)}{2 \sqrt{\pi} \,\Gamma(s)} \right)^{1/2}.
\end{equation}
\end{proposition}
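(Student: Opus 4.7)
The plan is to transfer the entire statement to the real line via the isometric isomorphism $A:\widetilde{H}^s(-1,1)\to H^s(\R)$, $(Au)(\omega)=u_1(\omega)=u(\tanh\omega)$, described in Subsection~\ref{sec2.3} and extended to $s\ge 0$ in Subsection~\ref{sec2.5}, and then invoke the classical Sobolev embedding $H^s(\R)\subset C_0(\R)$ for $s>1/2$. Since $A$ is an isometry between the norms used in the statement and in $H^s(\R)$ (with the Fourier-side definition $\|u_1\|^2_{H^s(\R)}=\frac{1}{2\pi}\int(1+\zeta^2)^s|\widehat{u}_1(\zeta)|^2\,d\zeta$, which matches $\|u\|^2_{\widetilde{H}^s}$ after the change of variable $\zeta=2\xi$ in \eqref{**1aa} using \eqref{zamena2}), both the continuity assertion and the sharp constant will be inherited from the real-line statement.

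First, I would prove the pointwise bound on the line. Writing $u_1$ as an inverse Fourier integral and applying Cauchy--Schwarz with weights $(1+\zeta^2)^{\pm s/2}$,
\begin{equation*}
|u_1(\omega)|^2 \le \left(\frac{1}{2\pi}\intop_{-\infty}^\infty \!\! (1+\zeta^2)^s|\widehat{u}_1(\zeta)|^2\,d\zeta\right)\left(\frac{1}{2\pi}\intop_{-\infty}^\infty \!\!(1+\zeta^2)^{-s}\,d\zeta\right)= I(s)\,\|u_1\|^2_{H^s(\R)},
\end{equation*}
where $I(s)=\frac{1}{2\pi}\int_{-\infty}^\infty (1+\zeta^2)^{-s}d\zeta$. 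The integral converges precisely when $s>1/2$, and the standard Beta-function evaluation $\int_{-\infty}^\infty(1+\zeta^2)^{-s}d\zeta=\sqrt{\pi}\,\Gamma(s-1/2)/\Gamma(s)$ gives $I(s)=\Gamma(s-1/2)/(2\sqrt{\pi}\,\Gamma(s))=C(s)^2$, matching the constant in \eqref{vlozh}.

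Next, I would settle continuity and the boundary values. The integrand $\widehat{u}_1(\zeta)$ is in $L_1(\R)$ by the same Cauchy--Schwarz argument, so $u_1$ is the inverse Fourier transform of an $L_1$ function, hence continuous and vanishing at infinity by the Riemann--Lebesgue lemma. Since $x=\tanh\omega$ is a homeomorphism $\R\to(-1,1)$ with $\omega\to+\infty\Leftrightarrow x\to-1$ and $\omega\to-\infty\Leftrightarrow x\to 1$, setting $u(\pm 1):=\lim_{\omega\to\mp\infty}u_1(\omega)=0$ extends $u$ to a function in $C[-1,1]$ that automatically satisfies \eqref{gran_usl}. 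The pointwise estimate then translates directly to $\|u\|_{C[-1,1]}=\|u_1\|_{C(\R)}\le C(s)\|u_1\|_{H^s(\R)}=C(s)\|u\|_{\widetilde{H}^s}$.

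There is no serious obstacle here: the argument is essentially the standard Sobolev trace/embedding on $\R$, with the interval version obtained for free from the isomorphism $A$. The only points requiring a little care are making sure the normalization constants in the Fourier-side definition of $\|\cdot\|_{H^s(\R)}$ line up with the factor $\frac{1}{\pi}(1+4\xi^2)^s$ in \eqref{**1aa} (this is the change of variables $\zeta=2\xi$ combined with \eqref{zamena2}), and correctly identifying the limits at $\omega=\pm\infty$ with $x=\mp 1$ so that the boundary values in \eqref{gran_usl} come out.
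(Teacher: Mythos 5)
Your proof is correct and follows essentially the same route as the paper: transfer to $H^s(\R)$ via the isometry $A$, the Cauchy--Schwarz bound showing $\widehat{u}_1\in L_1(\R)$, the Riemann--Lebesgue lemma for the vanishing limits at $\omega=\pm\infty$ (hence $x=\mp1$), and the Beta-integral evaluation yielding exactly the constant $C(s)$. No gaps.
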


\begin{proof}
We rely on the relation between  $\caP$ and the Fourier transform (see Subsection \ref{sec2.3}).

 By the Sobolev embedding theorem,  for $s>1/2$ the space $H^s(\R)$ is continuously embedded in 
 the space of uniformly continuous functions. Next, a function $u_1(\omega)$ from the class $H^s(\R)$ satisfies  
 $\widehat{u}_1\in L_1(\R)$, because 
\begin{equation}
\label{Hs_2}
 \begin{aligned}
 \intop_{-\infty}^\infty |\widehat{u}_1(\xi)|\, d\xi 
 \le \!
 \left(\,\intop_{-\infty}^\infty (1+\xi^2)^{-s}\, d\xi\right)^{1/2}
 \left(\,\intop_{-\infty}^\infty (1+\xi^2)^s|\widehat{u}_1(\xi)|^2 \, d\xi \right)^{1/2} 
 \! = C_s \|u_1\|_{H^s(\R)} < \infty.
 \end{aligned}
\end{equation}
 Here $C_s= \left(\frac{2 \pi^{3/2}\, \Gamma(s-1/2)}{\Gamma(s)}\right)^{1/2}$. Then from the inversion formula 
 $$
 u_1(\omega) = \frac{1}{2\pi} \int_{-\infty}^\infty e^{- i \xi \omega} \widehat{u}_1(\xi)\, d\xi,
 $$ 
by the Riemann--Lebesgue lemma, it follows that 
\begin{equation}
\label{Hs_3}
\lim_{\omega \to \pm \infty} u_1(\omega) =0.
\end{equation}
Thus, $u_1(\omega)$ belongs to the class $C_0(\R)$ of uniformly continuous functions on $\R$ satisfying conditions \eqref{Hs_3}.
By \eqref{Hs_2} and the inversion formula, 
\begin{equation}
\label{Hs_4}
\max_{\omega \in \R} |u_1(\omega)| \le \frac{1}{2\pi}\int_{-\infty}^\infty |\widehat{u}_1(\xi)|\, d\xi 
 \le \frac{1}{2\pi} C_s \|u_1\|_{H^s(\R)}.
\end{equation}

Now, let $u\in \widetilde{H}^s(-1,1)$, $s>1/2$. Making substitutions \eqref{zamena1}, we see that 
the function $(Au)(\omega)=u_1(\omega)$ belongs to  $H^s(\R)$, and $\|u_1\|_{H^s(\R)} = \|u\|_{\widetilde{H}^s}$. 
Then from the properties of the function $u_1(\omega)$ proved above  it follows that $u(x)$ is uniformly continuous on the closed interval $[-1,1]$ and satisfies the boundary conditions \eqref{gran_usl}. 
The identity  
$$\|u\|_{C[-1,1]} = \max_{x \in [-1,1]} |u(x)|= \max_{\omega \in \R} |u_1(\omega)|$$
and inequality \eqref{Hs_4} imply estimate \eqref{vlozh}.
\end{proof}

\subsection{Transformation of the distributions} The transform $\caP$ for distributions is defined by duality, similarly to definition of the Fourier transform for distributions. 
Consider the space $\mathcal X$ consisting of the functions $\psi \in C^\infty[-1,1]$ such that 
the following seminorms 
$$
\sup_{x \in [-1,1]} (1-x^2)^n|\psi^{(n)}(x)| \left( 1 + \operatorname{ln}^2 \Bigl(\frac{1-x}{1+x}\Bigr)\right)^{k}
$$
are finite for all $n,k \in {\mathbb Z}_+$. Convergence in $\mathcal X$ is understood as convergence with respect to this set of seminorms. Let ${\mathcal X}'$ be the class of distributions dual to  $\mathcal X$ with respect to the pairing in  $\widetilde{L}_2(-1,1)$. The transform $\caP$ takes the class $\mathcal X$ onto the Schwartz class 
${\mathcal S}(\R)$. Let $u \in {\mathcal X}'$. Then  $\caP u \in {\mathcal S}'(\R)$ is defined by 
$$
(\caP u, \varphi)_{L_2(\R)} = (u, \caP^* \varphi)_{\widetilde{L}_2(-1,1)}= \pi (u, \caP^{-1}\varphi)_{\widetilde{L}_2(-1,1)},
\quad \varphi \in {\mathcal S}(\R).
$$

Below we need the result of calculation of the $\caP$-image of the distribution $v(y)={1\over y}$ (understood in the mean value sense); see \cite[(1.9)]{Petrov_PMA}:
\begin{equation}
\caP\left[{1\over y}\right](\xi)=\intop_{-1}^1 {1\over y}\left({1-y\over1+y}\right)^{i\xi}\,{dy\over1-y^2}=-i\pi\coth\pi\xi,\label{0090}
\end{equation}
where the integral is understood in the mean value sense both at the point $y=0,$ and at the ends of the interval.

\subsection{The convolution formulas} The following convolution formulas are valid for the transform $\caP$:
\begin{align}
\caP\left[\;\intop_{-1}^1 u(x)\,v\left({y-x\over1-xy}\right)\, \frac{dx}{1-x^2}\right](\xi)&=U(\xi)\,V(\xi), \label{0060}\\
\caP\left[\;\intop_{-1}^1 u'(x)\,v\left({y-x\over1-xy}\right)\,dx\right](\xi)&=2i\xi\,U(\xi)\,V(\xi).\label{0070}
\end{align}
In \eqref{0060} it is assumed that $u\in \widetilde{L}_2(-1,\,1)$ 
and $v\in \widetilde{L}_1(-1,\,1)\cap  \widetilde{L}_2(-1,\,1).$  
The class $\widetilde{L}_1(-1,\,1)$ is distinguished by the condition $\int_{-1}^1 |v(t)| (1-t^2)^{-1}\,dt < \infty$.
Identity \eqref{0070} is valid under the same conditions on  $v$ and for $u \in \widetilde{H}^1$.
However, as in the case of the Fourier transform,  the conditions of applicability of relations \eqref{0060}, \eqref{0070} can be expanded significantly. In particular,  one can relax the requirements on  $v$, assuming $v$ to be  a distribution  and, if necessary, imposing more restrictive conditions on $u$. We will not go into details here.

\section{The weak solution of the Prandtl equation. Approach via the transform $\caP$\label{sec3}}

\subsection{``Another'' definition of the weak solution}
Now, we apply the transform $\caP$ to the study of problem \eqref{2.1.10}. 
As above, we start from the formal considerations. 
Using the boundary conditions  $u(-1)=u(1)=0$ and the identity {$1-x^2=1-xt+x(t-x),$} we have
\begin{equation}
-(1-x^2) \intop_{-1}^1 {u'(t)\over t-x}\,dt = \intop_{-1}^1  u'(t){1-xt\over x-t}\,dt. \label{00100}
\end{equation}
The form of the right-hand side allows us to apply the convolution formula  \eqref{0070} with $v(t) = t^{-1}$ in order to calculate the $\caP$-image of the function \eqref{00100}.
Taking \eqref{0090} into account, we obtain 
\begin{equation}
\caP\left[-(1-x^2){1 \over 2\pi}\intop_{-1}^1 {u'(t)\over t-x}\,dt\right](\xi)= \xi\,\coth\pi\xi\, U(\xi).\label{00105}
\end{equation} 
Multiplying  \eqref{2.1.10} by some function $\overline{g(x)}$, integrating over the interval, and using  \eqref{0050},  \eqref{00105}, we arrive at 
\begin{equation}
\intop_{-1}^1 V(x) u(x)\, \overline{g(x)} \,dx+\frac{1}{\pi}\intop_{-\infty}^\infty \xi\,\coth\pi\xi\, U(\xi)\,\overline{G(\xi)}\,d\xi=\intop_{-1}^1 f(x)\,\overline{g(x)}\,dx.
\label{00107}
\end{equation} 
The sesquilinear form in the left-hand side of \eqref{00107} is denoted by 
\begin{equation}
\label{010}
[u,\,g]_1:=\intop_{-1}^1 V(x) u(x)\, \overline{g(x)} \,dx+\frac{1}{\pi}\intop_{-\infty}^\infty \xi\,\coth\pi\xi\, U(\xi)\,\overline{G(\xi)}\,d\xi.
\end{equation}

The natural class to look for the weak solution is the space 
$\widetilde{H}^{1/2}(-1,1)$; see Subsection \ref{sec2.5}.
The form $[u,u]_1^{1/2}$ defines the norm in  $\widetilde{H}^{1/2}$ equivalent to the standard norm. Indeed, by the estimates
\begin{equation}
\label{**11}
 \frac{1}{\pi^2}  + \frac{2}{3} \xi^2 \le  \xi^2\,\coth^2\pi\xi \le \frac{1}{\pi^2} + \xi^2, \quad \xi \in \R,
\end{equation} 
$$
\intop_{-1}^1  V(x)|u(x)|^2 \,dx \le M \intop_{-1}^1  \frac{|u(x)|^2}{1- x^2}\,dx
= \frac{M}{\pi} \intop_{-\infty}^\infty |U(\xi)|^2 \, d\xi \le M \| u \|^2_{\widetilde{H}^{1/2}},\quad u \in \widetilde{H}^{1/2},
$$
we  see that 
\begin{equation*}
 \frac{1}{\pi}\| u \|^2_{\widetilde{H}^{1/2}} \le [u,u]_1 \le (M +1/2) \| u\|^2_{\widetilde{H}^{1/2}}, 
 \quad u\in \widetilde{H}^{1/2}.
\end{equation*}
Then the sesquilinear form $[u,g]_1$ given by  
  \eqref{010} can be taken as the inner product in $\widetilde{H}^{1/2}$.  
 By Proposition \ref{prop_plotno}, the set $C_0^\infty(-1,1)$ is dense in~$\widetilde{H}^{1/2}$.

 Relation \eqref{00107}  can be written as 
\begin{equation*}
[u,\,g]_1 =(f,\,g).
\end{equation*}
A natural class for $f$ is the space $(\widetilde{H}^{1/2})^*$ dual to 
$\widetilde{H}^{1/2}$ with respect to the pairing in $L_2(-1,1)$. 
(It is defined similarly to the space $(H^{1/2}_{00})^*$.)
The norm in $(\widetilde{H}^{1/2})^*$ is given by    
 \begin{equation*}
\|f\|_{(\widetilde{H}^{1/2})^*} = \sup_{0 \ne u \in \widetilde{H}^{1/2}}
 \frac{|(f,u)|}{\|u\|_{\widetilde{H}^{1/2}}}.
\end{equation*}

\begin{remark}
For $u,g \in C_0^\infty(-1,1)$ we have  $[u,g]_1 = [u,g]$, where $[u,g]$ is given by  
\eqref{2.1.30a}. Indeed, both expressions $[u,g]$ and $[u,g]_1$  are equal to the integral of the 
left-hand side of \eqref{2.1.10} multiplied by $\overline{g(x)}$. Using that $C_0^\infty(-1,1)$ is dense in 
  $H_{00}^{1/2}$, as well as in $\widetilde{H}^{1/2}$,  $[u,u]^{1/2}$ defines the norm in $H_{00}^{1/2}$ equivalent to 
  the standard one, and $[u,u]_1^{1/2}$ defines the norm in $\widetilde{H}^{1/2}$ equivalent to the standard one, we conclude that  $\widetilde{H}^{1/2} = H_{00}^{1/2}$
  and $[u,g]_1 = [u,g]$ for any $u,g \in  H_{00}^{1/2}$.
  Hence, the space $(\widetilde{H}^{1/2})^*$ coincides with $(H_{00}^{1/2})^*$.
\end{remark}

Now, on the basis of identity \eqref{00107}, we can give the definition of the weak solution of problem 
 \eqref{2.1.10}, which is independent of Definition~\ref{def1.1}.

\begin{definition}\label{def3.2}
Let $f \in (\widetilde{H}^{1/2})^*$. An element $u\in \widetilde{H}^{1/2}$ satisfying the integral identity \eqref{00107} for any  $g\in \widetilde{H}^{1/2}$ is called the weak solution of problem   \eqref{2.1.10}.
\end{definition}

The following theorem is equivalent to  Theorem \ref{th1.2} and is proved in a similar way 
(with the help of the Riesz theorem). 

\begin{theorem}\label{th3.3}
Suppose that  $V(x)$ satisfies \eqref{p}. Then for any 
$f \in  (\widetilde{H}^{1/2})^*$ there exists a unique weak solution $u\in \widetilde{H}^{1/2}$ of problem   \eqref{2.1.10}. The solution satisfies the following estimate\emph{:} 
\begin{equation}\label{***1}
\|u \|_{\widetilde{H}^{1/2}} \le   \pi \|f\|_{(\widetilde{H}^{1/2})^*}.
\end{equation}
\end{theorem}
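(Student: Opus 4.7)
The plan is to mimic the Riesz-representation argument used for Theorem~\ref{th1.2}, but with the new sesquilinear form $[u,g]_1$ from \eqref{010} in place of $[u,g]$. The preparatory work has essentially been done already in the text immediately preceding the theorem: the two-sided bound
$$
\frac{1}{\pi}\|u\|^2_{\widetilde{H}^{1/2}} \le [u,u]_1 \le (M + 1/2)\|u\|^2_{\widetilde{H}^{1/2}},
$$
based on the elementary estimate \eqref{**11} for $\xi\coth\pi\xi$ and the bound $(1-x^2)V(x)\le M$ from \eqref{p}, shows that $[\cdot,\cdot]_1$ is an inner product whose induced norm is equivalent to the standard one on $\widetilde{H}^{1/2}$. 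Thus $(\widetilde{H}^{1/2},[\cdot,\cdot]_1)$ is a Hilbert space.

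Next, I would rewrite the defining identity \eqref{00107} in the abstract form $[u,g]_1 = (f,g) =: l_f(g)$. For $f\in(\widetilde{H}^{1/2})^*$, the right-hand side is an anti-linear continuous functional on the Hilbert space $(\widetilde{H}^{1/2},[\cdot,\cdot]_1)$, with
$$
|l_f(g)| \le \|f\|_{(\widetilde{H}^{1/2})^*}\,\|g\|_{\widetilde{H}^{1/2}},
$$
and this bound transfers to the equivalent norm $[g,g]_1^{1/2}$ up to the constant $\sqrt{\pi}$. Applying the Riesz representation theorem in this Hilbert space yields a unique $u\in\widetilde{H}^{1/2}$ with $[u,g]_1 = l_f(g)$ for every $g\in\widetilde{H}^{1/2}$; this is precisely the weak solution in the sense of Definition~\ref{def3.2}, giving both existence and uniqueness.

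For the quantitative estimate \eqref{***1}, I would test the identity against $g=u$ and chain the two inequalities: the lower bound for $[u,u]_1$ gives
$$
\tfrac{1}{\pi}\|u\|^2_{\widetilde{H}^{1/2}} \le [u,u]_1 = (f,u) \le \|f\|_{(\widetilde{H}^{1/2})^*}\,\|u\|_{\widetilde{H}^{1/2}},
$$
and dividing by $\|u\|_{\widetilde{H}^{1/2}}$ (which we may assume nonzero) produces $\|u\|_{\widetilde{H}^{1/2}} \le \pi\|f\|_{(\widetilde{H}^{1/2})^*}$, exactly \eqref{***1}. There is really no hard step here: the substantive input was already packaged into the bound \eqref{**11} and the definition of the form $[u,g]_1$; the only thing to keep straight is that the factor $\pi$ in \eqref{***1} comes from the $1/\pi$ in the lower coercivity bound, not from the upper bound involving $M$. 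The equivalence with Theorem~\ref{th1.2} is then automatic from the remark identifying $\widetilde{H}^{1/2}=H^{1/2}_{00}$ and $[u,g]_1=[u,g]$, but the argument above does not need to invoke it.
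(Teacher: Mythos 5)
Your proposal is correct and follows essentially the same route as the paper: the authors state that Theorem~\ref{th3.3} is proved exactly like Theorem~\ref{th1.2}, via the Riesz representation theorem in the Hilbert space $(\widetilde{H}^{1/2},[\cdot,\cdot]_1)$, with the coercivity bound $\frac{1}{\pi}\|u\|^2_{\widetilde{H}^{1/2}}\le[u,u]_1$ supplying the constant $\pi$ in \eqref{***1}. Nothing is missing.
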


\textbf{Example}.
Suppose that $f \in L_{2,r}(-1,1),$ see \eqref{H}. Then 
\begin{equation*} 
|(f,g)| \le  \|f\|_{L_{2,r}} \|g\|_{\widetilde{H}^{1/2}},\quad g \in \widetilde{H}^{1/2}, 
\end{equation*}
cf. \eqref{2.1.47}. Hence,  $L_{2,r} \subset (\widetilde{H}^{1/2})^*$ and 
$\|f\|_{(\widetilde{H}^{1/2})^*} \le \|f\|_{L_{2,r}}$.
Together with \eqref{***1}, this implies that 
\begin{equation*}
\|u \|_{\widetilde{H}^{1/2}} \le {\pi}  \|f\|_{L_{2,r}},
\end{equation*}
cf. \eqref{012}. In what follows, we will also need the estimate  
\begin{equation}
\label{013a}
\intop_{-1}^1  V(x) |u(x)|^2  \, dx \le   \frac{\pi}{4}  \|f\|^2_{L_{2,r}},
\end{equation}
which follows from the identity $[u,u]_1 = (f,u)$. Indeed, we have 
$$
 \begin{aligned}
   &\intop_{-1}^1 V(x) |u(x)|^2\,dx+\frac{1}{\pi}\intop_{-\infty}^\infty \xi\,\coth\pi\xi\, |U(\xi)|^2\,d\xi=\intop_{-1}^1 f(x)\,\overline{u(x)}\,dx
 \le \|f\|_{L_{2,r}} \| u\|_{\widetilde{L}_2}  =
 \\ 
 &= \|f\|_{L_{2,r}} \left( \frac{1}{\pi}\intop_{-\infty}^\infty |U(\xi)|^2 \, d\xi\right)^{1/2} \le \|f\|_{L_{2,r}} \left(\intop_{-\infty}^\infty  \xi\,\coth\pi\xi\, |U(\xi)|^2 \, d\xi\right)^{1/2} \le
 \\
 &\le \frac{\pi}{4}\|f\|_{L_{2,r}} ^2 + \frac{1}{\pi}\intop_{-\infty}^\infty \xi\,\coth\pi\xi\, |U(\xi)|^2\,d\xi.
 \end{aligned}
 $$
We have used the Parceval identity \eqref{0050}, the lower estimate \eqref{**11}, and the elementary inequality  $ab \le \alpha a^2 + \frac{1}{4\alpha} b^2$ (for positive numbers $a,b$ with arbitrary $\alpha >0$).

In the next subsection, we will show that for $f \in L_{2,r}$ the solution is more regular.

\subsection{Improvement of regularity of the solution}

In this subsection, it is assumed that  $f \in L_{2,r}(-1,1),$ i.~e.,  \eqref{H} is satisfied.
Obviously, the space $ L_{2,r}(-1,1)$ is dual to $\widetilde{L}_2(-1,1)$ with respect to the pairing in $L_2(-1,1)$. In other words,  $L_{2,r}(-1,1) =(\widetilde{H}^0)^*$ and 
$$
\|f\|_{(\widetilde{H}^0)^*}= \| f\|_{L_{2,r}}.
$$ 

The integral identity \eqref{00107} can be written as 
\begin{equation}
\frac{1}{\pi} \intop_{-\infty}^\infty \xi\,\coth\pi\xi\, U(\xi)\,\overline{G(\xi)}\,d\xi=\intop_{-1}^1 \left(f(x)- V(x)u(x)\right)\overline{g(x)}\,dx, \quad g \in \widetilde{H}^{1/2}.
\label{001073}
\end{equation} 
Denote 
$$
Q(\xi)=\caP\left[(1-x^2)\left(f(x)- V(x) u(x)\right)\right](\xi)=\intop_{-1}^1 \left(f(x)- V(x)u(x) \right)\left({1-x\over1+x}\right)^{i\xi}\,dx.
$$
It is easily seen that the function $(1-x^2)\left(f(x)- V(x) u(x) \right)$ belongs to  
$\widetilde{L}_2(-1,1)$, whence 
$Q \in L_2(\R)$. Indeed, by  \eqref{p}, \eqref{0050}, and \eqref{013a}, 
\begin{equation}
\begin{aligned}
& \frac{1}{\pi} \intop_{-\infty}^\infty |Q(\xi)|^2\,d\xi
=\intop_{-1}^1 (1-x^2)\left|f(x)- V(x)u(x)\right|^2\,dx\le
\\ 
&\le 2 \intop_{-1}^1 (1-x^2)|f(x)|^2\,dx+2
\intop_{-1}^1 (1-x^2) V(x) \cdot V(x) |u(x)|^2\,dx 
\le C_1  \intop_{-1}^1 (1-x^2)\,|f(x)|^2\,dx,
\label{001077}
\end{aligned}
\end{equation}
where $C_1 = 2+  \frac{\pi}{2} M$.

Now, using \eqref{0050}, we represent identity \eqref{001073} as 
\begin{equation}
 \intop_{-\infty}^\infty \xi\,\coth\pi\xi\, U(\xi)\,\overline{G(\xi)}\,d\xi=
\intop_{-\infty}^\infty Q(\xi)\,\overline{G(\xi)}\,d\xi,
\quad G \in \caP[ \widetilde{H}^{1/2}].
\label{001075}
\end{equation} 
For $N>0$, we define the following cut-off function 
$$
w(\xi,N)=\begin{cases} 1,&|\xi|<N\\ 0,& |\xi|> N +1\\ -\xi+N+1,& N <\xi< N+1 \\ \xi+N+1,& -N-1 <\xi<-N
\end{cases}
$$
and take the test function of the form 
\begin{equation}\label{dv5}
G_N(\xi)=\xi\,\coth\pi\xi\,U(\xi)\,w(\xi,N).
\end{equation}
Let us check that  $G_N \in \caP[ \widetilde{H}^{1/2}]$. Let $g_N = \caP^{-1}(G_N)$. Then from  \eqref{**11} it follows that 
\begin{equation}\label{*1}
\begin{aligned}
\|g_N\|_{ \widetilde{H}^{1/2}}^2 &=\frac{1}{\pi}  \intop_{-\infty}^\infty (1+ 4 \xi^2)^{1/2} |G_N(\xi)|^2 \, d\xi \le
\\
&\le \frac{1}{\pi}  \intop_{-N-1}^{N+1} (1+ 4 \xi^2)^{1/2}  \xi^2 \coth^2 \pi\xi \, |U(\xi)|^2 \, d\xi
\le C(N) \|u\|^2_{\widetilde{H}^{1/2}},
\end{aligned}
\end{equation}
where $C(N) = \frac{1}{\pi^2}+(N+1)^2$.
Thus, $g_N \in  \widetilde{H}^{1/2}$, and we can substitute $G_N \in \caP[ \widetilde{H}^{1/2} ]$ 
as a test function in identity \eqref{001075}. We obtain 
\begin{equation}\label{*4}
\!\intop_{-\infty}^\infty\! \xi^2\,\coth^2\pi\xi \,|U(\xi)|^2\,w(\xi, N)\,d\xi
=\intop_{-\infty}^\infty \xi \,\coth \pi\xi \, Q(\xi) \,w(\xi, N) \overline{U(\xi)}\,d\xi.
\end{equation} 

Next, by analogy with \eqref{*1}, it is easily seen that the function $\widetilde{G}_N(\xi)= Q(\xi) \,w(\xi, N)$
belongs to the class $\caP[\widetilde{H}^{1/2}]$. From identity \eqref{001075} with $G(\xi)=\widetilde{G}_N(\xi)$ it follows that  
\begin{equation}
 \intop_{-\infty}^\infty \xi \,\coth \pi\xi \, Q(\xi) \,w(\xi,N) \overline{U(\xi)}\,d\xi
=
\intop_{-\infty}^\infty | Q(\xi)|^2 \,w(\xi,N) \,d\xi.
\end{equation}
Together with \eqref{*4}, this yields 
\begin{equation}
 \!\intop_{-\infty}^\infty\! \xi^2\,\coth^2\pi\xi \,|U(\xi)|^2\,w(\xi,N)\,d\xi
=\intop_{-\infty}^\infty | Q(\xi)|^2 \,w(\xi,N) \,d\xi.
\end{equation} 
Consequently,
\begin{equation}
\!\intop_{-\infty}^\infty\! \xi^2\,\coth^2\pi\xi \,|U(\xi)|^2\,w(\xi,N)\,d\xi
\le \intop_{-\infty}^\infty | Q(\xi)|^2  \,d\xi.
\end{equation} 
Letting $N$ tend to infinity and applying  the Fatou theorem, we conclude that the integral 
$\int_{-\infty}^\infty \xi^2\,\coth^2\pi\xi \,|U(\xi)|^2\,d\xi$ converges and satisfies the estimate  
\begin{equation}\label{*6}
\!\intop_{-\infty}^\infty\! \xi^2\,\coth^2\pi\xi \,|U(\xi)|^2\,d\xi
\le \intop_{-\infty}^\infty | Q(\xi)|^2  \,d\xi.
\end{equation} 

 From \eqref{001077} and \eqref{*6} it follows that 
\begin{equation*}
\intop_{-\infty}^\infty\! \xi^2\,\coth^2\pi\xi \,|U(\xi)|^2\,d\xi
\le {\pi C_1} \intop_{-1}^1 (1-x^2) | f(x)|^2  \,dx.
\end{equation*}
Combining this with \eqref{**1aa} and \eqref{**11}, we obtain that $u \in \widetilde{H}^1$ and 
\begin{equation*}
\|u\|^2_{\widetilde{H}^1} \le C_2  \intop_{-1}^1 (1-x^2) | f(x)|^2  \,dx,
\quad C_2 =  {\pi^2 C_1} = {\pi^2} \left( 2 +  \frac{\pi}{2} M\right).
\end{equation*} 
By Proposition \ref{prop_2.1}, the solution is continuous on $[-1,1]$ and satisfies the boundary 
conditions $u(-1) = u(1)=0$. 
 Note that the solution $u \in \widetilde{H}^1$ satisfies identity \eqref{00107} for any test function 
 $g \in \widetilde{H}^0$.
 Moreover, for this solution the initial statement of the problem \eqref{2.1.10} can be used (the equation is satisfed almost everywhere, the boundary conditions are fulfilled, and therefore there is no need to replace the problem by the integral identity). Such solution is called the   \emph{strong solution}.
 
As a result, we arrive at the following theorem about regularity of the solution. 

\begin{theorem}\label{th3.4}
Suppose that  $V(x)$ satisfies  \eqref{p}.  Suppose that $f(x)$
is subject to condition  \eqref{H}.
 Then the weak solution $u(x)$ of problem  \eqref{2.1.10} belongs to the class $\widetilde{H}^{1}(-1,1)$
 and satisfies the estimate
\begin{equation*}
\intop_{-1}^1 \left( \frac{|u(x)|^2}{1-x^2} + (1-x^2) |u'(x)|^2 \right) dx
\le C_2 \intop_{-1}^1 (1-x^2) | f(x)|^2  \,dx.
\end{equation*}
The constant $C_2= {\pi^2} \left( 2 +  \frac{\pi}{2} M\right)$ depends only on the constant $M$ from condition \eqref{p}.
\end{theorem}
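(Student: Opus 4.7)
The plan is to upgrade the regularity $u \in \widetilde{H}^{1/2}$ already furnished by Theorem \ref{th3.3} by testing the weak identity \eqref{00107} against a carefully chosen family of functions and passing to a limit. The strategy exploits the fact that, on the transform side, the equation has the schematic form ``$\xi \coth \pi\xi \cdot U = Q$ modulo the $V u$ term'', and the goal is to upgrade this to an $L_2(\R)$ bound on $\xi U$.

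First, I would transfer the $Vu$ term to the right-hand side of \eqref{00107}, rewriting the weak identity as
$$
\frac{1}{\pi} \intop_{-\infty}^\infty \xi \coth \pi\xi \, U(\xi) \overline{G(\xi)}\, d\xi = \intop_{-1}^1 \bigl(f(x) - V(x) u(x)\bigr)\, \overline{g(x)}\, dx, \quad g \in \widetilde{H}^{1/2}.
$$
Setting $Q(\xi) := \caP\bigl[(1-x^2)(f - Vu)\bigr](\xi)$, I would verify $Q \in L_2(\R)$ using condition \eqref{p} together with the a priori bound \eqref{013a} on $\int V |u|^2$: indeed $(1-x^2) V \le M$ and $(1-x^2) V \cdot V |u|^2 \le M\, V |u|^2$ give $(1-x^2)(f - Vu) \in \widetilde{L}_2$ with norm controlled by $\|f\|_{L_{2,r}}$. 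By the Parseval identity \eqref{0050}, the right-hand side then equals $\int Q(\xi)\overline{G(\xi)}\, d\xi$, bringing the identity into a purely transform-side form.

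The formal temptation is to now choose $G(\xi) = \xi \coth \pi\xi \, U(\xi)$, which would yield $\int \xi^2 \coth^2 \pi\xi \, |U|^2\, d\xi = \int Q \cdot \xi \coth \pi\xi \, \overline{U}\, d\xi$, and to combine it with the choice $G = Q$ to obtain the desired $\int \xi^2 \coth^2 \pi\xi \, |U|^2 \le \int |Q|^2$. The principal obstacle is that neither candidate $G$ is a priori known to lie in $\caP[\widetilde{H}^{1/2}]$, since that would already presuppose $u \in \widetilde{H}^1$. To circumvent this circularity, I would introduce a Lipschitz cutoff $w(\xi, N)$ equal to $1$ on $[-N, N]$ and vanishing outside $[-N-1, N+1]$, and then test against the truncated functions $G_N(\xi) = \xi \coth \pi\xi \, U(\xi) w(\xi, N)$ and $\widetilde{G}_N(\xi) = Q(\xi) w(\xi, N)$. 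Since $\xi \coth \pi\xi$ is bounded on the support of $w(\xi,N)$, both corresponding originals lie in $\widetilde{H}^{1/2}$ by the norm formula behind \eqref{**1aa}, and are therefore admissible test functions in the weak identity.

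The two substitutions give, after equating the cross term, the clean truncated identity
$$
\intop_{-\infty}^\infty \xi^2 \coth^2 \pi\xi \, |U(\xi)|^2 w(\xi, N)\, d\xi = \intop_{-\infty}^\infty |Q(\xi)|^2 w(\xi, N)\, d\xi.
$$
Letting $N \to \infty$ and invoking Fatou's lemma on the left yields the untruncated inequality $\int \xi^2 \coth^2 \pi\xi \, |U|^2 \le \int |Q|^2$. Combining this with the lower bound from \eqref{**11} and the norm characterization \eqref{**1aa} of $\widetilde{H}^1$, and with the $L_2(\R)$-estimate on $Q$ established at the first step, produces $u \in \widetilde{H}^1(-1,1)$ and the quantitative estimate with a constant $C_2$ depending only on $M$. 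Finally, continuity on $[-1,1]$ and the boundary conditions $u(\pm 1) = 0$ follow for free from Proposition \ref{prop_2.1}.
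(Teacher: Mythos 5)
Your proposal is correct and follows essentially the same route as the paper: moving $Vu$ to the right-hand side, bounding $Q=\caP[(1-x^2)(f-Vu)]$ in $L_2(\R)$ via \eqref{p} and \eqref{013a}, testing with the truncated functions $\xi\coth\pi\xi\,U(\xi)w(\xi,N)$ and $Q(\xi)w(\xi,N)$, equating the cross terms, and passing to the limit by Fatou. No substantive differences from the paper's argument.
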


\subsection{Interpolation}

Let $R$ be the operator taking  $f$ into the solution of problem \eqref{2.1.10}. 
Theorems \ref{th3.3} and \ref{th3.4} show that the operator $R$ is continuous from  
$(\widetilde{H}^{1/2})^*$ to $\widetilde{H}^{1/2}$ and from $(\widetilde{H}^{0})^*$ to $\widetilde{H}^{1}$.
We have
\begin{align}
\label{interp1}
\| R \|_{(\widetilde{H}^{1/2})^* \to \widetilde{H}^{1/2}} \le {\pi},
\\
\label{interp2}
\| R \|_{(\widetilde{H}^{0})^* \to \widetilde{H}^{1}} \le \sqrt{C_2}.
\end{align}
Denote by $(\widetilde{H}^s)^*$ the space dual to $\widetilde{H}^{s}$ with respect to the pairing in 
$L_2(-1,1)$. Interpolating between  \eqref{interp1} and \eqref{interp2}, we obtain 
$$
\| R \|_{(\widetilde{H}^{1/2 - \theta})^* \to \widetilde{H}^{1/2 + \theta}} \le C_\theta=
 {\pi}^{1-2\theta} C_2^\theta, \quad 0\le \theta \le \frac{1}{2}.
$$
We arrive at the following result.

\begin{theorem}\label{th3.5}
Suppose that  $V(x)$ satisfies conditions \eqref{p}.  Let \hbox{$0\! \le\! \theta\! \le\! 1/2$} and
$f \in (\widetilde{H}^{1/2 - \theta})^*$. Then the weak solution $u(x)$ of problem  \eqref{2.1.10} belongs to the class $\widetilde{H}^{1/2 +\theta}(-1,1)$ and satisfies the estimate
\begin{equation*}
\| u\|_{\widetilde{H}^{1/2 +\theta}} \le C_\theta \| f\|_{(\widetilde{H}^{1/2 - \theta})^*}.
\end{equation*}
The constant $C_\theta$ depends only on the constant $M$ from condition \eqref{p} and on $\theta$.
\end{theorem}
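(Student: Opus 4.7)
The plan is to obtain Theorem \ref{th3.5} as a direct application of the complex interpolation method to the two endpoint bounds \eqref{interp1} and \eqref{interp2} that have already been recorded. First I would observe that the solution operator $R$ is defined consistently on both endpoint spaces: since $\widetilde{H}^{1/2} \subset \widetilde{H}^{0}$ continuously and densely (the density following from Proposition \ref{prop_plotno}), the dual embedding gives $(\widetilde{H}^{0})^* \subset (\widetilde{H}^{1/2})^*$, and the strong solution produced by Theorem \ref{th3.4} for $f \in L_{2,r} = (\widetilde{H}^{0})^*$ agrees with the weak solution of Theorem \ref{th3.3} by uniqueness. Thus $R$ is a single linear map that is simultaneously bounded between the two pairs of Hilbert spaces.

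Next I would identify the intermediate spaces. By Subsection \ref{sec2.5}, the isometric isomorphism $A \colon \widetilde{H}^s(-1,1) \to H^s(\R)$ transports the classical complex-interpolation structure of the Bessel-potential scale $\{H^s(\R)\}_{s \ge 0}$ onto $\{\widetilde{H}^s(-1,1)\}_{s\ge 0}$. Consequently, for $0 \le \theta \le 1/2$,
\begin{equation*}
[\widetilde{H}^{1/2}, \widetilde{H}^1]_{2\theta} = \widetilde{H}^{1/2+\theta}, \qquad [\widetilde{H}^{1/2}, \widetilde{H}^0]_{2\theta} = \widetilde{H}^{1/2-\theta},
\end{equation*}
with equivalent norms, since $(1-2\theta)\cdot\tfrac{1}{2}+2\theta\cdot 1 = \tfrac{1}{2}+\theta$ and $(1-2\theta)\cdot\tfrac{1}{2}+2\theta\cdot 0 = \tfrac{1}{2}-\theta$. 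For the source side of the operator I would then apply the duality theorem of complex interpolation for compatible pairs of reflexive (in fact Hilbert) spaces, which yields
\begin{equation*}
[(\widetilde{H}^{1/2})^*, (\widetilde{H}^0)^*]_{2\theta} = \bigl([\widetilde{H}^{1/2}, \widetilde{H}^0]_{2\theta}\bigr)^* = (\widetilde{H}^{1/2-\theta})^*.
\end{equation*}

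Finally, the standard interpolation inequality for linear operators acting between two interpolation pairs gives
\begin{equation*}
\| R \|_{(\widetilde{H}^{1/2-\theta})^* \to \widetilde{H}^{1/2+\theta}} \le \| R \|_{(\widetilde{H}^{1/2})^* \to \widetilde{H}^{1/2}}^{1-2\theta} \cdot \| R \|_{(\widetilde{H}^{0})^* \to \widetilde{H}^{1}}^{2\theta} \le \pi^{1-2\theta} C_2^{\theta} = C_\theta,
\end{equation*}
which is precisely the asserted estimate; the continuous embedding $\widetilde{H}^{1/2+\theta}(-1,1) \subset \widetilde{H}^{1/2}$ ensures that the interpolated solution coincides with the weak solution already provided by Theorem \ref{th3.3}.

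The only nontrivial point I expect to verify carefully is the duality identification $[(\widetilde{H}^{1/2})^*, (\widetilde{H}^0)^*]_{2\theta} = ([\widetilde{H}^{1/2}, \widetilde{H}^0]_{2\theta})^*$: one needs that $\{\widetilde{H}^{1/2}, \widetilde{H}^0\}$ is a regular Banach couple (dense inclusion of the intersection in each factor), which again follows from Proposition \ref{prop_plotno}, and then the complex-interpolation duality theorem applies since both factors are Hilbert, hence reflexive. Once this is in place the argument is pure bookkeeping of interpolation exponents.
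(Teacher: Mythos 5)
Your proposal is correct and follows essentially the same route as the paper, which simply interpolates between the endpoint bounds \eqref{interp1} and \eqref{interp2}; your additional care about the consistency of $R$ on the two source spaces and the duality identification $[(\widetilde{H}^{1/2})^*,(\widetilde{H}^0)^*]_{2\theta}=(\widetilde{H}^{1/2-\theta})^*$ just makes explicit what the paper leaves implicit. No gaps.
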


Theorem \ref{th3.5} and Proposition \ref{prop_2.1} imply the following corollary.

 \begin{corollary}
 Under condition $f \in (\widetilde{H}^{1/2 - \theta})^*$, where $0 < \theta \le 1/2$,  the solution $u(x)$ of problem \eqref{2.1.10} is continuous on the closed interval $[-1,1]$ and satisfies conditions $u(-1) = u(1)=0$. 
 We have
 \begin{equation*}
 \| u \|_{C[-1,1]}  \le  C(1/2 + \theta) C_\theta \, \| f \|_{(\widetilde{H}^{1/2 - \theta})^*}.
\end{equation*}
 \end{corollary}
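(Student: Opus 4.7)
The plan is to obtain the corollary as an immediate composition of the two preceding results in the section, namely the regularity statement of Theorem \ref{th3.5} and the embedding statement of Proposition \ref{prop_2.1}. Since the hypothesis is $f \in (\widetilde{H}^{1/2-\theta})^*$ with $0 < \theta \le 1/2$, Theorem \ref{th3.5} is directly applicable and guarantees that the weak solution $u$ lies in $\widetilde{H}^{1/2+\theta}(-1,1)$ together with the norm bound $\|u\|_{\widetilde{H}^{1/2+\theta}} \le C_\theta \|f\|_{(\widetilde{H}^{1/2-\theta})^*}$.

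Next, I would invoke Proposition \ref{prop_2.1}. Its hypothesis requires the smoothness index to exceed $1/2$, which is exactly why we need the strict inequality $\theta > 0$: then $s := 1/2+\theta > 1/2$. Proposition \ref{prop_2.1} then supplies three facts simultaneously: the continuous embedding $\widetilde{H}^{1/2+\theta}(-1,1) \subset C[-1,1]$, the vanishing of the trace at the endpoints, $u(-1)=u(1)=0$, and the quantitative estimate
$$
\|u\|_{C[-1,1]} \le C(1/2+\theta)\, \|u\|_{\widetilde{H}^{1/2+\theta}}.
$$
Concatenating this with the bound from Theorem \ref{th3.5} immediately yields the claimed inequality $\|u\|_{C[-1,1]} \le C(1/2+\theta)\, C_\theta\, \|f\|_{(\widetilde{H}^{1/2-\theta})^*}$.

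There is essentially no obstacle here beyond verifying that the indices match: the exponent $1/2+\theta$ produced by Theorem \ref{th3.5} is precisely the exponent at which Proposition \ref{prop_2.1} gives the Sobolev-type embedding, and the endpoint case $\theta = 1/2$ (giving $s=1$) was already handled explicitly in the proof of Theorem \ref{th3.4}, so it is consistent. The only subtle point worth flagging is that the corollary excludes $\theta = 0$, since at $\theta=0$ the space $\widetilde{H}^{1/2}$ does not embed into $C[-1,1]$; this is in agreement with the remark made earlier in the paper that functions in $H^{1/2}_{00} = \widetilde{H}^{1/2}$ need not be continuous.
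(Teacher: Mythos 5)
Your proposal is correct and coincides with the paper's argument: the corollary is stated there precisely as the composition of Theorem \ref{th3.5} (giving $u\in\widetilde{H}^{1/2+\theta}$ with the norm bound) and Proposition \ref{prop_2.1} applied with $s=1/2+\theta>1/2$. Nothing further is needed.
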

 
\subsection{Duality}
For completeness, we consider the so called ``very weak'' solution from the class 
$\widetilde{L}_2(-1,1)$, assuming that $f \in (\widetilde{H}^1)^*$.

\begin{definition}\label{def3.7}
Let $f \in (\widetilde{H}^1)^*$. A function $u \in \widetilde{L}_2(-1,1)$ satisfying identity \eqref{00107} for any test function $g \in \widetilde{H}^1(-1,1)$ is called the very weak solution of problem \eqref{2.1.10}. 
\end{definition}

 \begin{theorem}\label{th3.8}
 Suppose that  $V(x)$ satisfies conditions \eqref{p}.
 For any $f \in (\widetilde{H}^1)^*$ there exists a unique very weak solution  
 $u \in \widetilde{L}_2(-1,1)$ of problem \eqref{2.1.10}. We have 
\begin{equation}
\label{dv0}
\| u \|_{\widetilde{L}_2}  \le \sqrt{C_2} \| f \|_{(\widetilde{H}^1)^*}.
\end{equation}
 \end{theorem}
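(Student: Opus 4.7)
The plan is to prove Theorem \ref{th3.8} by a Hilbert-space duality argument anchored on Theorem \ref{th3.4}. In that theorem the solution operator $R$ is established as a bounded linear map $R: L_{2,r}(-1,1) = (\widetilde{H}^0)^* \to \widetilde{H}^1(-1,1)$ with norm at most $\sqrt{C_2}$. Since the sesquilinear form $[\cdot,\cdot]_1$ in \eqref{010} is Hermitian (the multiplier $\xi\,\coth\pi\xi$ is real), I expect $R$ to be essentially self-adjoint with respect to the $L_2$ pairing, so that its adjoint will map $(\widetilde{H}^1)^* \to (L_{2,r})^* = \widetilde{L}_2$ and produce the very weak solution.

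Before constructing $u$, I would first extend $[u,g]_1$ to a bounded sesquilinear form on $\widetilde{L}_2 \times \widetilde{H}^1$. The $V$-term is controlled by $M\|u\|_{\widetilde{L}_2}\|g\|_{\widetilde{L}_2}$ using condition \eqref{p} and Cauchy-Schwarz in the measure $(1-x^2)^{-1}\,dx$, while the $\caP$-term is handled via the upper estimate in \eqref{**11} together with Parceval \eqref{0050} and the norm definition \eqref{**1aa} of $\widetilde{H}^1$. Hermitian symmetry of $[\cdot,\cdot]_1$ then yields the same bound on $\widetilde{H}^1 \times \widetilde{L}_2$.

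The key subsidiary claim is surjectivity of $R$: every $g \in \widetilde{H}^1$ can be written as $g = Rh$ for a unique $h \in L_{2,r}$. Indeed, $g' \mapsto [g,g']_1$ is a continuous antilinear functional on $\widetilde{L}_2$ by the previous step, and the identification $(\widetilde{L}_2)^* = L_{2,r}$ via the $L_2$ pairing (an immediate consequence of Riesz in the weighted Hilbert space $\widetilde{L}_2$) yields a unique $h \in L_{2,r}$ such that $[g,g']_1 = (h, g')_{L_2}$ for all $g' \in \widetilde{L}_2$. Restricting $g'$ to $\widetilde{H}^{1/2}$ identifies $g$ with the weak solution in the sense of Definition \ref{def3.2} with right-hand side $h$, so by uniqueness in Theorem \ref{th3.3} we conclude $g = Rh$. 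I will also exploit the ``strong solution'' remark stated just before Theorem \ref{th3.5}: for $Rh \in \widetilde{H}^1$ the identity $[Rh, w]_1 = (h, w)_{L_2}$ holds not only for $w \in \widetilde{H}^{1/2}$ but for all $w \in \widetilde{L}_2$.

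With these tools in place, the functional $h \mapsto (f, Rh)$ on $L_{2,r}$ is antilinear and bounded by $\sqrt{C_2}\,\|f\|_{(\widetilde{H}^1)^*}\|h\|_{L_{2,r}}$, so Riesz representation furnishes a unique $u \in \widetilde{L}_2$ satisfying $(f, Rh) = (u, h)_{L_2}$ for all $h \in L_{2,r}$, with $\|u\|_{\widetilde{L}_2} \le \sqrt{C_2}\,\|f\|_{(\widetilde{H}^1)^*}$, which is precisely \eqref{dv0}. To verify the defining identity of Definition \ref{def3.7}, I take an arbitrary $g \in \widetilde{H}^1$, write $g = Rh$ by surjectivity, and chain
$$
[u,g]_1 \;=\; \overline{[Rh, u]_1} \;=\; \overline{(h, u)_{L_2}} \;=\; (u, h)_{L_2} \;=\; (f, Rh) \;=\; (f, g),
$$
using Hermitian symmetry and the extended strong-solution identity. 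Uniqueness is symmetric: if $[u,g]_1 = 0$ for all $g \in \widetilde{H}^1$, then testing against $g = Rh$ with $h \in C_0^\infty(-1,1) \subset L_{2,r}$ gives $(u,h)_{L_2} = 0$, so $u = 0$ a.e. The main obstacle I anticipate is the bookkeeping for the dual-space identifications and, above all, the extension of the defining identity $[Rh, w]_1 = (h, w)_{L_2}$ to the larger test-function class $w \in \widetilde{L}_2$; once this is settled, the Riesz representation does the rest.
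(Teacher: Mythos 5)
Your proposal is correct, and its existence part is essentially the paper's argument in different clothing: the paper defines $u=R^*f$ through the abstract adjoint of $R\colon(\widetilde{H}^0)^*\to\widetilde{H}^1$ and verifies the identity by letting $g=Rh$ sweep out $\widetilde{H}^1$, which is exactly your Riesz-representation construction of $u$ from the functional $h\mapsto(f,Rh)$. Where you genuinely diverge, and in a useful way, is in two places. First, you actually prove the surjectivity of $R$ onto $\widetilde{H}^1$ (via the boundedness of $g'\mapsto[g,g']_1$ on $\widetilde{L}_2$, the identification $(\widetilde{L}_2)^*=L_{2,r}$, and uniqueness in Theorem \ref{th3.3}); the paper only asserts ``if $g$ runs over $(\widetilde{H}^0)^*$, then $v=Rg$ runs over $\widetilde{H}^1$'' without justification, so your lemma fills a real gap in the published proof. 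Second, your uniqueness argument is different: you test the homogeneous identity against $g=Rh$ with $h\in C_0^\infty(-1,1)$ and use Hermitian symmetry plus the extended identity $[Rh,w]_1=(h,w)$ for $w\in\widetilde{L}_2$ to get $(u,h)=0$ directly, whereas the paper reruns the cut-off multiplier argument (the functions $w(\xi,N)$ and \eqref{dv5}) to bootstrap any homogeneous very weak solution into $\widetilde{H}^1$ and then invokes Theorem \ref{th3.3}. Your route is shorter and stays entirely at the level of duality; the paper's route is longer but yields as a by-product the stronger regularity statement that a very weak solution of the homogeneous equation automatically lies in $\widetilde{H}^1$. The one step you rightly flag as needing care --- extending $[Rh,\cdot]_1=(h,\cdot)$ from $\widetilde{H}^{1/2}$ to $\widetilde{L}_2$ --- is legitimate by the density of $\widetilde{H}^{1/2}$ in $\widetilde{L}_2$ and the continuity of both sides, and is the same extension the paper itself uses in \eqref{dv2}.
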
 
 
 \begin{proof}
 Let $R: (\widetilde{H}^{0})^* \to \widetilde{H}^{1}$ be the resolving operator from Theorem \ref{th3.4}. 
Then the continuous adjoint operator $R^*: (\widetilde{H}^{1})^*\to \widetilde{H}^{0}$ is defined correctly
by the relation 
\begin{equation}
\label{dv1}
(R g, f) = (g, R^* f), \quad g \in (\widetilde{H}^{0})^*,\ f \in (\widetilde{H}^{1})^*.
\end{equation}

Fix $f \in (\widetilde{H}^{1})^*.$ Let us check that $u = R^* f \in   \widetilde{H}^{0}$ is the very weak solution of problem \eqref{2.1.10}.

Let $g \in (\widetilde{H}^{0})^*$. Then $v := Rg \in \widetilde{H}^1$ satisfies the identity 
$$
[v, u]_1 = (g,u), \quad \forall u \in \widetilde{H}^0. 
$$ 
Here the form $[v,u]_1$ (see \eqref{010}) is extended  
to the pairs $v \in \widetilde{H}^1$ and $u \in \widetilde{H}^0$. In other words,
\begin{equation}
\label{dv2}
[Rg, u]_1 = (g,u), \quad \forall g \in (\widetilde{H}^{0})^*,\ u \in  \widetilde{H}^{0}.
\end{equation}

Substituting the function $u = R^* f \in   \widetilde{H}^{0}$ in \eqref{dv2},  we obtain 
\begin{equation}
\label{dv3}
[Rg, u]_1 = (g, R^* f) = (Rg, f), \quad \forall g \in (\widetilde{H}^{0})^*.
\end{equation}
We have used  \eqref{dv1}.
Note that,  if $g$ runs over $(\widetilde{H}^{0})^*$, then $v =Rg$ runs over $\widetilde{H}^1$.
Therefore, identity \eqref{dv3} can be written as 
$$
[v, u]_1 =  (v, f), \quad \forall v \in \widetilde{H}^1.
$$
 Since the  form \eqref{010} is Hermitian, this  is equivalent to the identity  
$$
[u, v]_1 =  (f, v), \quad \forall v \in \widetilde{H}^1.
$$
This means that $u = R^* f \in   \widetilde{H}^{0}$ is the very weak solution of problem \eqref{2.1.10}.

To prove uniqueness, assume that $u \in \widetilde{L}_2(-1,1)$ is the very weak solution 
of problem \eqref{2.1.10} with $f=0$. By analogy with \eqref{001073}--\eqref{001075}, we represent the identity 
for $u$ in the form 
\begin{equation}
 \intop_{-\infty}^\infty \xi\,\coth\pi\xi\, U(\xi)\,\overline{G(\xi)}\,d\xi=
\intop_{-\infty}^\infty Q(\xi)\,\overline{G(\xi)}\,d\xi,
\quad G \in \caP[ \widetilde{H}^{1}].
\label{dv4}
\end{equation} 
We have $Q \in L_2(\R)$ and $\|Q\|_{L_2(\R)} \le \sqrt{\pi}M \|u\|_{\widetilde{L}_2}$.    
Substituting the test function $G_N(\xi)$ of the form \eqref{dv5}
(which belongs to $\caP[ \widetilde{H}^{1}]$) in \eqref{dv4}, by analogy with  \eqref{*4}--\eqref{*6},
we prove that $u \in \widetilde{H}^1$.
Now, Theorem \ref{th3.3} (the uniqueness part) implies that $u=0$. 

Estimate \eqref{dv0} follows from  \eqref{interp2} and the equality 
$\| R^* \|_{(\widetilde{H}^{1})^*\to \widetilde{H}^{0}} = \|R\|_{(\widetilde{H}^{0})^* \to \widetilde{H}^{1}}$.
\end{proof}

From the uniqueness of the very weak solution  
it follows that the resolving operator $R^* : (\widetilde{H}^{1})^*\to \widetilde{H}^{0}$
is an extension of the operator $R: (\widetilde{H}^{1/2})^*\to \widetilde{H}^{1/2}$.
Keeping the same notation $R$ for the extended operator, we rewrite  \eqref{dv0} as 
\begin{equation}
\| R \|_{(\widetilde{H}^{1})^*\to \widetilde{H}^{0}} \le \sqrt{C_2}.
\label{dv10}
\end{equation} 
Interpolating between \eqref{dv10} and \eqref{interp2}, we obtain 
$$
\| R \|_{(\widetilde{H}^{1 - s})^* \to \widetilde{H}^{s}} \le  \sqrt{C_2}, \quad 0\le s \le 1.
$$

We arrive at the following final result, which combines the statements of all previous theorems. 

\begin{theorem}\label{th3.9}
Suppose that  $V(x)$ satisfies conditions \eqref{p}.
 Let $0 \le s \le 1$. For any $f \in (\widetilde{H}^{1-s})^*$ there exists a unique solution 
 $u \in \widetilde{H}^{s}(-1,1)$ of problem \eqref{2.1.10}. We have 
\begin{equation*}
\| u \|_{\widetilde{H}^{s}}  \le \sqrt{C_2} \| f \|_{(\widetilde{H}^{1-s})^*}.
\end{equation*}
\end{theorem}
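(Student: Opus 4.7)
The plan is to obtain Theorem~\ref{th3.9} by complex interpolation between the two endpoint results---Theorem~\ref{th3.4} at $s=1$ and Theorem~\ref{th3.8} at $s=0$---and to deduce uniqueness at intermediate $s$ directly from Theorem~\ref{th3.8}. A preliminary step is to confirm that $R^*$ from Theorem~\ref{th3.8} extends the earlier resolving operator consistently, which the discussion preceding \eqref{dv10} already records. Thus a single operator $R$ enjoys simultaneously
$$
\| R\|_{(\widetilde{H}^{0})^* \to \widetilde{H}^{1}} \le \sqrt{C_2}, \qquad \| R\|_{(\widetilde{H}^{1})^* \to \widetilde{H}^{0}} \le \sqrt{C_2}.
$$

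Next, I would invoke the Calder\'on complex interpolation theorem. The isometric isomorphism $A$ of Subsection~\ref{sec2.5} identifies the scale $\{\widetilde{H}^s\}_{s \ge 0}$ with the classical Sobolev scale $\{H^s(\R)\}_{s \ge 0}$, so in particular $[\widetilde{H}^0, \widetilde{H}^1]_s = \widetilde{H}^s$ by the standard interpolation of Sobolev spaces on the line. Dualizing with respect to the $L_2(-1,1)$ pairing and applying the duality theorem for complex interpolation---valid because the scale is reflexive and $C_0^\infty(-1,1)$ is dense in every $\widetilde{H}^s$ by Proposition~\ref{prop_plotno}---one obtains the reversed-order identity $[(\widetilde{H}^1)^*, (\widetilde{H}^0)^*]_s = (\widetilde{H}^{1-s})^*$. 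Interpolation of $R$ between the two endpoints then yields, for each $s \in [0,1]$,
$$
\| R \|_{(\widetilde{H}^{1-s})^* \to \widetilde{H}^s} \le (\sqrt{C_2})^{1-s}(\sqrt{C_2})^{s} = \sqrt{C_2},
$$
which provides existence of $u = Rf$ and the claimed norm estimate.

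For uniqueness I would argue as follows. Suppose $u \in \widetilde{H}^s$ satisfies the integral identity \eqref{00107} with $f=0$ for every $g \in \widetilde{H}^{1-s}$. The embedding $\widetilde{H}^1 \subset \widetilde{H}^{1-s}$, valid for $s \in [0,1]$, allows us to restrict to test functions $g \in \widetilde{H}^1$, while the embedding $\widetilde{H}^s \subset \widetilde{L}_2$, valid for $s \ge 0$, places $u$ in $\widetilde{L}_2$. Hence $u$ is a very weak solution with zero right-hand side in the sense of Definition~\ref{def3.7}, and the uniqueness part of Theorem~\ref{th3.8} forces $u=0$.

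The main obstacle---and really the only nontrivial input---is the duality identity $[(\widetilde{H}^1)^*, (\widetilde{H}^0)^*]_s = (\widetilde{H}^{1-s})^*$ used to identify the interpolated source space correctly. This is a standard consequence of reflexivity of the scale (inherited from $H^s(\R)$ via $A$) together with the density statement of Proposition~\ref{prop_plotno}, but it is the one place where care is required: the duals are taken with respect to the $L_2(-1,1)$ pairing rather than any of the Hilbert inner products on the $\widetilde{H}^s$, and the order-reversal in passing to duals must be tracked to see that the interpolation parameter becomes $1-s$ in the source.
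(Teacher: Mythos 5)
Your proposal follows essentially the same route as the paper: the authors likewise note that $R^*$ extends $R$, record the two endpoint bounds \eqref{interp2} and \eqref{dv10}, and interpolate to get $\| R \|_{(\widetilde{H}^{1-s})^* \to \widetilde{H}^{s}} \le \sqrt{C_2}$, with uniqueness for intermediate $s$ reducing to the very weak case of Theorem~\ref{th3.8}. You are in fact more explicit than the paper about the one delicate point, the identification $[(\widetilde{H}^1)^*, (\widetilde{H}^0)^*]_s = (\widetilde{H}^{1-s})^*$ for duals taken with respect to the $L_2(-1,1)$ pairing, which the paper leaves implicit.
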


In Theorem \ref{th3.9}, for $0 \le s < 1/2$ the solution is understood as the very weak solution in the sense of Definition \ref{def3.7},
 for $1/2 \le s < 1$ the solution is understood as the weak solution in the sense of Definition \ref{def3.2}, and for $s=1$ the solution is understood as the strong solution.

\end{document}